\def\frak{\mathfrak}
\newtheorem{prop}[equation]{Proposition}
\newtheorem*{prop*}{Proposition}
\newtheorem{thm}[equation]{Theorem}
\newtheorem*{thm*}{Theorem}
\newtheorem{lem}[equation]{Lemma}
\newtheorem*{lem*}{Lemma}
\newtheorem*{kor*}{Corollary}
\newtheorem{cor}[equation]{Corollary}
\newtheorem{ex}[equation]{Example}
\numberwithin{equation}{section}
\lstdefinelanguage{Sage}{
	language=Python,
	morekeywords={sage, var, solve, matrix, plot, show},
	keywordstyle=\color{blue}\bfseries,
}
\newcommand{\Lat}{\tilde\La}
\newcommand{\frg}{\mathfrak{g}}
\newcommand{\frk}{\mathfrak{k}}
\newcommand{\frp}{\mathfrak{p}}
\newcommand{\frt}{\mathfrak{t}}
\newcommand{\frso}{\mathfrak{so}}
\newcommand{\Cas}{\operatorname{Cas}}
\newcommand{\bbar}{\,|\,}
\def\bbC{\mathbb{C}}
\def\bbZ{\mathbb{Z}}
\let\ccdot\cdot
\def\cdot{\hbox to 2.5pt{\hss$\ccdot$\hss}}
\newcommand{\p}{{\frak p}}
\newcommand{\eq}{\begin{equation}}
	\newcommand{\eeq}{\end{equation}}
\newcommand{\eqn}{\begin{equation*}}
	\newcommand{\bmul}{\begin{multline*}}
		\newcommand{\eemul}{\end{multline*}}
	\newcommand{\eeqn}{\end{equation*}}
\newcommand{\pf}{\begin{proof}}
	\newcommand{\epf}{\end{proof}}
\newcommand{\la}{\lambda}
\renewcommand{\phi}{\varphi}
\newcommand{\La}{\Lambda}
\newcommand{\Lad}{\Lambda^{\dom}}
\newcommand{\eps}{\varepsilon}
\newcommand{\half}{\frac{1}{2}}
\newcommand{\dom}{\operatorname{dom}}
\let\ssize\scriptstyle
\newif\ifFIRST\newdimen\MAXright\MAXright0pt
\def\sdynkin{\bgroup\eightpoint\dynkin}
\def\endsdynkin{\enddynkin\egroup}
\def\dynkin{\bgroup\FIRSTtrue\hskip.5em\setbox1\hbox{$\diagup$}%
	\setbox2\hbox{$\diagdown$}%
	\setbox0\hbox to2\wd1{\hrulefill}%
	\setbox3\hbox{$\bullet$}%
	\setbox4\hbox{$\times$}%
	\setbox7\hbox{$\circ$}
	\def\whiteroot##1{\ifFIRST\setbox5\hbox{$##1$}\ifdim\wd5>1.3em
		\hskip-.5em\hskip.5\wd5\fi\fi\FIRSTfalse
		\hskip-.25em\raise1.5\wd3\hbox to0pt{\hss\hskip.45em$
			\ssize##1$\hss}\copy7\hskip-.25em\setbox6\hbox{$##1$}
		\MAXright\wd6}
	\def\root##1{\ifFIRST\setbox5\hbox{$##1$}\ifdim\wd5>1.3em%
		\hskip-.5em\hskip.5\wd5\fi\fi\FIRSTfalse%
		\hskip-.25em\raise1.5\wd3\hbox to0pt{\hss\hskip.45em$%
			\ssize##1$\hss}\copy3\hskip-.25em\setbox6\hbox{$##1$}%
		\MAXright\wd6}%
	\def\whitedroot##1{\ifFIRST\setbox5\hbox{$##1$}\ifdim\wd5>1.3em
		\hskip-.5em\hskip.5\wd5\fi\fi\FIRSTfalse
		\hskip-.25em\lower1.8\wd3\hbox to0pt{\hss\hskip.45em$
			\ssize##1$\hss}\copy7\hskip-.25em\setbox6\hbox{$##1$}
		\MAXright\wd6}%
	\def\whiterroot##1{\hskip-.25em\copy7\hbox to0pt{\hskip.3em$\ssize##1$\hss}%
		\hskip-.25em\setbox6\hbox{\hskip.6em$##1##1$}%
		\MAXright\wd6}%
	\def\droot##1{\ifFIRST\setbox5\hbox{$##1$}\ifdim\wd5>1.3em%
		\hskip-.5em\hskip.5\wd5\fi\fi\FIRSTfalse%
		\hskip-.25em\lower1.8\wd3\hbox to0pt{\hss\hskip.45em$%
			\ssize##1$\hss}\copy3\hskip-.25em\setbox6\hbox{$##1$}%
		\MAXright\wd6}%
	\def\rroot##1{\hskip-.25em\copy3\hbox to0pt{\hskip.3em$\ssize##1$\hss}%
		\hskip-.25em\setbox6\hbox{\hskip.6em$##1##1$}%
		\MAXright\wd6}%
	\def\norroot##1{\hskip-.36em\copy4\hbox to0pt{\hskip.3em$\ssize##1$\hss}%
		\hskip-.48em\setbox6\hbox{\hskip.6em$##1##1$}%
		\MAXright\wd6}%
	\def\noroot##1{\ifFIRST\setbox5\hbox{$##1$}\ifdim\wd5>1.3em%
		\hskip-.5em\hskip.5\wd5\fi\fi\FIRSTfalse%
		\hskip-.36em\raise1.5\wd3\hbox to0pt{\hss\hskip.6em$%
			\ssize##1$\hss}\copy4\hskip-.38em\setbox6\hbox{$##1$}%
		\MAXright\wd6}%
	\def\nodroot##1{\ifFIRST\setbox5\hbox{$##1$}\ifdim\wd5>1.3em%
		\hskip-.5em\hskip.5\wd5\fi\fi\FIRSTfalse%
		\hskip-.36em\lower1.8\wd3\hbox to0pt{\hss\hskip.6em$%
			\ssize##1$\hss}\copy4\hskip-.38em\setbox6\hbox{$##1$}%
		\MAXright\wd6}%
	\def\nolink{\hskip\wd0}
	\def\link{\raise.22em\copy0}%
	\def\llink##1{\raise.32em\copy0\hskip-\wd0%
		\raise.12em\copy0\hskip-.5\wd0\hbox to0pt{\hss$##1$\hss}\hskip.5\wd0}%
	\def\lllink##1{\raise.22em\copy0\hskip-\wd0\raise.32em\copy0\hskip-\wd0%
		\raise.12em\copy0\hskip-.5\wd0\hbox to0pt{\hss$##1$\hss}\hskip.5\wd0}%
	\def\rootupright##1{\hbox to0pt{\raise.45em\copy1\hskip-.25em\raise1.3\ht1%
			\hbox{\copy3\hskip.3em$\ssize##1$}\hss}%
		\setbox6\hbox{\hskip.6em\copy1\copy1$##1##1$}%
		\ifdim\MAXright<\wd6\MAXright\wd6\fi}%
	\def\whiterootupright##1{\hbox to0pt{\raise.45em\copy1\hskip-.25em\raise1.3\ht1
			\hbox{\copy7\hskip.3em$\ssize##1$}\hss}
		\setbox6\hbox{\hskip.6em\copy1\copy1$##1##1$}
		\ifdim\MAXright<\wd6\MAXright\wd6\fi}
	\def\norootupright##1{\hbox to0pt{\raise.45em\copy1\hskip-.36em\raise1.3\ht1%
			\hbox{\copy4\hskip.3em$\ssize##1$}\hss}%
		\setbox6\hbox{\hskip.6em\copy1\copy1$##1##1$}%
		\ifdim\MAXright<\wd6\MAXright\wd6\fi}%
	\def\rootdownright##1{\hbox to0pt{\raise-.5em\copy2\hskip-.25em\raise-1.35\ht1%
			\hbox{\copy3\hskip.3em$\ssize##1$}\hss}\setbox6%
		\hbox{\hskip.6em\copy2\copy2$##1##1$}%
		\ifdim\MAXright<\wd6\MAXright\wd6\fi}%
	\def\whiterootdownright##1{\hbox to0pt{\raise-.5em\copy2\hskip-.25em\raise-1.35\ht1
			\hbox{\copy7\hskip.3em$\ssize##1$}\hss}\setbox6
		\hbox{\hskip.6em\copy2\copy2$##1##1$}
		\ifdim\MAXright<\wd6\MAXright\wd6\fi}
	\def\rootdown##1{\hbox to0pt{\hskip-.05em\vrule height.25em depth.65em%
			\hskip-.25em\raise-.95em\hbox{\copy3\hskip.3em$\ssize##1$}\hss}%
		\setbox6\hbox{$##1$}%
		\ifdim\MAXright<\wd6\MAXright\wd6\fi}%
	\def\whiterootdown##1{\hbox to0pt{\hskip-.05em\vrule height.25em depth.65em
			\hskip-.25em\raise-.95em\hbox{\copy7\hskip.3em$\ssize##1$}\hss}
		\setbox6\hbox{$##1$}
		\ifdim\MAXright<\wd6\MAXright\wd6\fi}
	\def\dots{\hskip.5em\cdots\hskip.5em}}%
\def\enddynkin{\ifdim\MAXright>1em\hskip.5\MAXright\else\hskip.5em\fi\egroup}%
\begin{document} 
	
	\title[Unitary highest weight modules]{On the classification of unitary highest weight modules in the exceptional cases} 
	\author{Pavle Pand\v zi\'c}
	\address[Pand\v zi\'c]{Department of Mathematics, Faculty of Science, University of Zagreb, Bijeni\v cka 30, 10000 Zagreb, Croatia}
	\email{pandzic@math.hr}
	\author{Ana Prli\'c}
	\address[Prli\'c]{Department of Mathematics, Faculty of Science, University of Zagreb, Bijeni\v cka 30, 10000 Zagreb, Croatia}
	\email{anaprlic@math.hr}
	\author{Gordan Savin}
	\address[Savin]{Department of Mathematics, University of Utah, Salt Lake City, UT 84112, }
	\email{savin@math.utah.edu}
	\author{Vladim\'{\i}r Sou\v cek}
	\address[Sou\v cek]{Matematick\'y \'ustav UK, Sokolovsk\'a 83, 186 75 Praha 8, Czech Republic}
	\email{soucek@karlin.mff.cuni.cz}
	\author{V\'it Tu\v cek}
	\address[Tu\v cek]{Department of Mathematics, Faculty of Science, University of Zagreb, Bijeni\v cka 30, 10000 Zagreb, Croatia}
	\email{vit.tucek@gmail.com}
	\date{}
	\thanks{P.Pand\v zi\'c and A.Prli\'c are supported by the QuantiXLie Center of Excellence, a project co-financed by the Croatian Government and European Union through the European Regional Development Fund - the Competitiveness and Cohesion Operational Programme (grant PK.1.1.02.0004). V. Tu\v cek was supported by the QuantiXLie Center of Excellence and by the grant GX19-28628X.
		V.~Sou\v cek was supported by the grants GX19-28628X and GA24-10887S of GAČR}
	
	\subjclass[2010]{primary: 22E47}
	\keywords{highest weight modules, unitary}
	\begin{abstract} 
		{In our previous paper \cite{PPST3}, we gave a complete classification of the unitary highest weight modules for the universal covers of the Lie groups $Sp(2n, \mathbb{R}), SO^{*}(2n)$ and $SU(p, q)$,  using the Dirac inequality and the so called PRV product. In this paper, we complete the classification of the unitary highest weight modules for the remaining cases; i.e., universal covers of the Lie groups $SO_{e}(2, n)$, $E_{6(-14)}$ and $E_{7(-25)}$. We also describe unitary highest weight modules with given infinitesimal characters.}
	\end{abstract}

	\maketitle
	
	\section{Introduction}
	Let $G$ denote the universal cover of one of the Lie groups $SO_{e}(2, n)$, $E_{6(-14)}$ and $E_{7(-25)}$, and let $K$ denote the subgroup of elements of $G$ fixed by a fixed Cartan involution of $G$.
	In all these cases $G$ is of Hermitian type, so there exist unitary highest weight Harish-Chandra modules, i.e., unitary 
	Harish-Chandra modules generated by a highest weight vector that is annihilated by the action of all positive root spaces in the 
	complexified Lie algebra $\frg$ of $G$ (\cite{HC1}, \cite{HC2}). Let $\frg_0=\frk_0\oplus\frp_0$ be the Cartan decomposition of the Lie algebra $\frg_0$
	of $G$ corresponding to the Cartan involution. As usual, we drop the subscript 0 to denote the complexifications.  Since in our cases $(G,K)$ is Hermitian, the $K$-module $\frp$ decomposes into two irreducible submodules, $\frp=\frp^+\oplus\frp^-$, and each of $\frp^\pm$ is an abelian subalgebra of $\frg$. Furthermore, $\frg$ and $\frk$ have equal rank, and we choose a Cartan subalgebra $\frt$ of $\frg$ contained in $\frk$. We fix compatible choices of positive roots for $(\frg,\frt)$ and $(\frk,\frt)$, $\Delta_\frg^+\supset\Delta_\frk^+$,  such that $\frp^+$ is contained in the Borel subalgebra defined by $\Delta_\frg^+$.  Let $W$ and $W_{\frk}$ denote the Weyl groups of $\frg$ and $\frk$ respectively.

	If the highest weight module is in addition a $(\frg, K)$--module, then its highest weight must be the highest weight of a $K$--type, hence it is dominant and integral for $\frk$. For each highest weight $\lambda$, there is a unique irreducible highest weight module with highest weight $\lambda$ and it can be constructed as the unique irreducible quotient $L(\lambda)$ of the generalized Verma module $N(\la)$ with highest weight $\lambda$ (see \cite[Section 1]{PPST3} for more details). We will consider only real weights (i.e., contained in the real span of roots) since otherwise, the associated highest-weight module can not be unitary.  In \cite{PPST3} we determined for which $\frk$-dominant integral and real weight $\lambda$ the module $L(\lambda)$ is unitary in cases when $G$ is the universal cover of the Lie groups $Sp(2n, \mathbb{R}), SO^{*}(2n)$ or $SU(p, q)$. This result was already proved in \cite{EHW} and independently in \cite{J}, but our approach was more elementary. In this paper, we will do the same in cases when $G$ is the universal cover of the Lie groups $SO(2, n)$, $E_6$ and $E_7$. In \cite{PPST3} the essential tool was the Dirac inequality of \cite{P2}. We will explain it briefly and refer the reader to a more detailed introduction of \cite{PPST3}.
	
	Let $B$ be the Killing form on $\frg$ and let $C(\frp)$ be the Clifford algebra of $\frp$ with respect to $B$.
	Let $b_i$ be any basis of $\frp$ and let $d_i$ be the dual basis with respect to $B$. The Dirac operator (defined in \cite{P1}) is defined as
	\[
	D=\sum_i b_i\otimes d_i\qquad \in U(\frg)\otimes C(\frp).
	\]
	The square of $D$ is the spin Laplacean (Parthasarathy \cite{P1}):
	\eq
	\label{D squared}  
	D^2=-(\Cas_\frg\otimes 1+\|\rho\|^2)+(\Cas_{\frk_\Delta}+\|\rho_\frk\|^2),
	\eeq
	where $\Cas_\frg$, $\Cas_{\frk_\Delta}$ are the Casimir elements of $U(\frg)$, $U(\frk_\Delta)$, where 
	$\frk_\Delta$ is the diagonal copy of $\frk$ in $U(\frg)\otimes C(\frp)$ defined by 
	\[
	\frk\hookrightarrow\frg\hookrightarrow U(\frg)\quad\text{and}\quad\frk\to\frso(\frp)\hookrightarrow C(\frp), 
	\]
	and $\rho$ and $\rho_\frk$ are the half sums of positive roots for $\frg$ respectively $\frk$.
	
	If $M$ is a $(\frg,K)$-module, then $D$ and $D^2$ act on $M\otimes S$, where $S$ is the spin module for $C(\frp)$. 
	If $M$ is a unitary $(\frg,K)$-module, we can tensor the inner product on $M$ with the inner product on $S$ (See \cite[2.3.9]{HP2} for more details about the inner product on the spin module) to obtain an inner product on $M\otimes S$, with the property that $D$ is self adjoint with respect to this inner product. It follows that $D^2\geq 0$; this is Parthasarathy's Dirac inequality mentioned above.
	
	Dirac inequality can be rewritten in more concrete terms using the formula \eqref{D squared} for $D^2$ and the relation between Casimir actions and infinitesimal characters. As in the introduction of \cite{PPST3} it can be easily shown that if $L(\la)$ is unitary, then the inequality
	\eq
	\label{ehw di}
	\|\mu+\rho\|\geq \|\la+\rho\|,
	\eeq
	must hold for any $K$-type $\mu$ of $L(\la)$. 
	
	\begin{prop}{\cite[Proposition 3.9.]{EHW}} With the notation as above, $L(\la)$ is unitary if and only if the inequality \eqref{ehw di} holds strictly for any $K$-type $\mu \neq \la$ of $L(\la)$.
	\end{prop}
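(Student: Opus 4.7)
The weak form $\|\mu+\rho\|\ge\|\la+\rho\|$ is exactly \eqref{ehw di}; the content of the proposition is promoting this to strict inequality for $\mu\ne\la$, together with its converse. I would treat the two implications separately.

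\emph{Necessity of strict inequality.} Assume $L(\la)$ is unitary and suppose, for contradiction, that some $K$-type $\mu\ne\la$ of $L(\la)$ attains $\|\mu+\rho\|=\|\la+\rho\|$. Using \eqref{D squared}, the operator $D^2$ acts on an irreducible $K$-constituent of $L(\la)\otimes S$ of highest weight $\mu+\tau$ by the scalar $\|\la+\rho\|^2-\|\mu+\tau+\rho_\frk\|^2$. One chooses the spinor weight $\tau$ adapted to the chain of noncompact positive roots separating $\mu$ from $\la$ so that, under the equality hypothesis, this scalar vanishes. Unitarity of $L(\la)$ makes $D$ self-adjoint on $L(\la)\otimes S$, so $\ker D^2=\ker D$, and the constituent represents a nonzero class in Dirac cohomology. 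Vogan's conjecture (Huang-Pand\v zi\'c) forces its highest weight $\nu$ to satisfy $\nu+\rho_\frk=w(\la+\rho)$ for some $w\in W$. Combining this with the fact that $\la$ is the unique minimal $K$-type of $L(\la)$, so every other $K$-type of $L(\la)$ lies strictly below $\la$ in the noncompact-root order, forces $\mu=\la$, contradicting the choice of $\mu$.

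\emph{Sufficiency.} Conversely, assume strict inequality holds for every $K$-type $\mu\ne\la$ of $L(\la)$. I would argue by a deformation/continuity argument along a one-parameter family $\la_s=\la+s\zeta$, with $\zeta$ in the direction of the centre of $\frk$. The unitary set $U=\{s\in\bR:L(\la_s)\text{ unitary}\}$ is known (cf.\ \cite{EHW}) to consist of a closed ray together with a discrete sequence of reduction points, and the boundary between $U$ and its complement is precisely characterised by the occurrence of equality $\|\mu+\rho\|=\|\la_s+\rho\|$ for some $K$-type $\mu$ of $L(\la_s)$ that is about to be expelled from the irreducible quotient. The strict-inequality hypothesis rules out $\la$ sitting at such a transition, placing it in $U$.

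The main obstacle is the necessity step: one must verify that the spinor weight $\tau$ can be chosen so that the corresponding $K$-constituent genuinely appears in $L(\la)\otimes S$ and realises $D^2=0$, and then translate the Vogan-type constraint into $\mu=\la$. The sufficiency rests on the structural description of the unitary locus along central lines, which is precisely what the EHW-type classification together with the PRV component analysis of \cite{PPST3} provides.
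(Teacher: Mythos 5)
The paper itself gives no proof of this statement; it is quoted directly from \cite[Proposition 3.9]{EHW}, so I am comparing your sketch with the standard argument there. Both halves of your plan have genuine gaps.

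In the necessity step, the final deduction is where the argument fails. From $D^2=0$ on the constituent $F_\mu\otimes\bC_{\rho(\frp^+)}$ you correctly conclude that it survives to Dirac cohomology and hence that $\mu+\rho=w(\la+\rho)$ for some $w\in W$. But the implication ``$\mu+\rho=w(\la+\rho)$ together with $\mu$ strictly below $\la$ forces $\mu=\la$'' is false as a statement about weights: for any minimal-length coset representative $w$ with $\ell(w)\geq 1$ the weight $\mu=w(\la+\rho)-\rho$ is $\frk$-dominant, lies strictly below $\la$, and satisfies the infinitesimal-character condition. Indeed the Dirac cohomology of a unitary highest weight module is typically supported on many such $w$ (it computes $\frp^+$-cohomology), so membership in $\ker D$ plus Vogan's conjecture cannot by itself rule out $\mu\neq\la$. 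What must be shown is that such a $\mu$ does not occur as a $K$-type of $L(\la)$, and that requires a module-theoretic input: for a $\frk$-highest weight vector $v$ in the $\mu$-isotypic component, the Casimir identity $\Omega_\frg-\Omega_\frk=\sum_{\beta\in\Delta(\frp^+)}(x_\beta x_{-\beta}+x_{-\beta}x_\beta)$ gives, up to a positive constant, $\bigl(\|\mu+\rho\|^2-\|\la+\rho\|^2\bigr)\langle v,v\rangle=\sum_{\beta\in\Delta(\frp^+)}\|x_\beta v\|^2$. Under unitarity, equality of norms forces $x_\beta v=0$ for every noncompact positive $\beta$, so $v$ is a $\frg$-singular vector of weight $\mu<\la$ inside the irreducible module $L(\la)$ --- a contradiction. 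That singular-vector step is the missing idea, and once you have this identity the whole Dirac-cohomology detour is unnecessary.

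The sufficiency step as sketched is circular. The description of the unitary set along the line $\la+s\zeta$ as a closed ray together with a discrete set of reduction points is a theorem of \cite{EHW} whose proof rests on Proposition 3.9, not the other way around; and the identification of the boundary of the unitary locus with the first occurrence of equality in \eqref{ehw di} for a $K$-type ``about to be expelled'' is exactly the delicate content of the classification, which cannot be assumed here. The actual argument is an induction on the $S(\frp^-)$-degree using the same Casimir identity: assuming the contravariant form on $L(\la)$ is positive definite up to level $n-1$, for a nonzero $\frk$-highest weight vector $v$ of weight $\mu$ at level $n$ the right-hand side $\sum_\beta\|x_\beta v\|^2$ is computed with the (positive definite) form at level $n-1$ and is strictly positive, since $x_\beta v=0$ for all $\beta$ would again produce a singular vector; the hypothesis $\|\mu+\rho\|^2>\|\la+\rho\|^2$ then yields $\langle v,v\rangle>0$, and positivity propagates to level $n$. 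I recommend replacing both halves of your plan with this single identity.
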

	The problem with this approach is that it is difficult to determine all the $K$--types of $L(\lambda)$. Luckily, it is enough to consider the so-called PRV components of the tensor products of the irreducible finite-dimensional $\frk$-module $F_{\la}$ with highest weight $\la$ and each $K$--type of the module $S(\frp^{-})$, the so-called Schmid modules, which are very well known (see \cite{S}, \cite{PPST3}).  More precisely, the following corollary holds (see \cite[Corollary 2.5]{PPST3} for the proof):
	\begin{cor}
		\label{cor unit nonunit}
		(1) Let $s_0$ be a Schmid module 
		such that for any Schmid module $s$ of strictly lower level than $s_0$, 
		the strict Dirac inequality 
		\eq
		\label{strict di s}
		\|(\la-s)^++\rho\|^2> \|\la+\rho\|^2
		\eeq
		holds. (Here $(\la - s)^+$ denotes the unique $W_{\frk}$--conjugate of $\la - s$). 
		Furthermore, suppose that
		\[
		\|(\la-s_0)^++\rho\|^2< \|\la+\rho\|^2.
		\]
		Then $L(\la)$ is not unitary.
		
		(2) If \eqref{strict di s} 
		holds for all Schmid modules $s$, then $N(\la)$ is irreducible and unitary.
	\end{cor}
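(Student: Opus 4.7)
The plan is to derive both parts from Proposition~3.9 of \cite{EHW}, using the well-known description of $K$-types of $N(\la)$ in terms of Schmid modules. Recall that $N(\la)\cong S(\frp^-)\otimes F_\la$ as a $\frk$-module, so every $K$-type of $N(\la)$ (and hence of $L(\la)$) is a constituent of $F_\la\otimes s$ for some Schmid module $s$ appearing in $S(\frp^-)$. The key fact from PRV theory is that the only constituent of $F_\la\otimes s$ that is genuinely \emph{new} at level $s$ (not already present as a $K$-type of $F_\la\otimes s'$ for some Schmid module $s'$ of lower level) is the PRV component $F_{(\la-s)^+}$. For this $K$-type, the Dirac inequality \eqref{ehw di} reduces exactly to \eqref{strict di s}.

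For part (1), I would argue by induction on the level of Schmid modules. The inductive claim is that if the strict inequality \eqref{strict di s} holds for every Schmid module of level strictly below the level of $s_0$, then the PRV component with highest weight $(\la-s_0)^+$ persists as a $K$-type of $L(\la)$. Were it absorbed into the maximal proper submodule $M\subset N(\la)$, it would be generated (via the action of $U(\frg)$) by a singular vector of some strictly lower level, whose $\frk$-type at that level would itself provide a PRV component failing the strict Dirac inequality, contradicting the hypothesis. Hence $(\la-s_0)^+$ appears as a $K$-type of $L(\la)$, and since by assumption $\|(\la-s_0)^++\rho\|^2<\|\la+\rho\|^2$, the criterion of Proposition~3.9 of \cite{EHW} is violated and $L(\la)$ fails to be unitary.

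For part (2), the same inductive argument applied at every level shows that the maximal submodule $M\subset N(\la)$ must be zero: any nontrivial singular vector would create, at its level, a PRV component contradicting \eqref{strict di s}. Consequently $N(\la)=L(\la)$ and is irreducible. Every $K$-type of $L(\la)$ is then a constituent of some $F_\la\otimes s$, and by the PRV analysis each satisfies the strict Dirac inequality \eqref{ehw di}. Proposition~3.9 of \cite{EHW} then yields unitarity.

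The main obstacle is the structural lemma underlying the inductive step: one must track precisely how singular vectors in $N(\la)$ decompose under $\frk$ and couple this decomposition with the PRV analysis of $F_\la\otimes S(\frp^-)$, in order to guarantee that no lower-level cancellations in passage $N(\la)\to L(\la)$ can spare the PRV component at level $s_0$ when all lower levels satisfy strict inequality. Once this control over the $\frk$-structure of the maximal submodule is in hand, both parts of the corollary follow immediately from the reformulated Dirac inequality and Proposition~3.9 of \cite{EHW}.
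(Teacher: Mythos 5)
There is a genuine gap, and it sits exactly at the point you flag as ``the main obstacle'': the statement you call the key fact from PRV theory --- that the only constituent of $F_\la\otimes E_s$ not already occurring at a strictly lower level is the PRV component $F_{(\la-s)^+}$ --- is false. For generic $\la$ the tensor product $F_\la\otimes E_s$ has on the order of $\dim E_s$ distinct constituents, so each level contributes many new $K$-types, not one per Schmid module; already for $\frsp(4,\bbR)$ at level two the count fails. What is actually true, and what the paper relies on by citing \cite[Corollary 2.5]{PPST3} for the proof, is a quantitative reduction statement: the Dirac inequality \eqref{ehw di} for an \emph{arbitrary} constituent $\mu$ of $F_\la\otimes E_s$ is implied by the (strict) inequalities \eqref{strict di s} for the PRV components of Schmid modules of level at most that of $s$. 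That reduction is the substantive technical content behind the corollary, and your proposal replaces it with a false combinatorial claim rather than proving it.

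The same conflation infects the singular-vector step in both parts. A nonzero $\frp^+$-invariant, $\frk$-highest vector in the maximal submodule at its lowest level $j_0$ is a singular vector, hence its weight $\mu$ satisfies $\|\mu+\rho\|=\|\la+\rho\|$ by the infinitesimal character argument; but $F_\mu$ need not be a PRV component of any $F_\la\otimes E_{s'}$, so its existence does not directly contradict the hypothesis that \eqref{strict di s} holds for PRV components of lower level. One only gets the contradiction after invoking the reduction above to promote strict inequality from PRV components to all constituents at levels $\leq j_0$. With that reduction in hand your outline does close up (lowest level of the radical forces $j_0\geq$ level of $s_0$, the singular-vector norm identity rules out $F_{(\la-s_0)^+}$ lying in the radical at level $j_0$, and part (2) follows by the same argument at every level together with \cite[Proposition 3.9]{EHW}); without it, both parts are unproved. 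Note also that the present paper does not reprove the corollary --- it defers entirely to \cite{PPST3} --- so the missing reduction cannot be waved off as known from the surrounding text.
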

	
	In \cite{PPST3}, we constructed all unitary highest weight modules in the discrete part of the classification via the Segal-Shale-Weil representation and its decompositions, tensor products and  PRV products (see \cite[Definition 2.10.]{PPST3}). Unfortunately, in exceptional cases, we cannot construct unitary highest-weight modules in the discrete part via the PRV products. However, we give a shorter proof of \cite[Proposition 12.6]{EHW} in Section \ref{e6:discrete} using the theta correspondence. For the proof that all highest weight modules in the discrete part of the classification for the case $E_7$ are unitary see \cite[Pages 139 and 140]{EHW}).  The situation is similar to the classical case in the sense that for $E_7$ the proof of \cite{EHW} uses the decomposition of the second tensor power of scalar unitarizable module from the discrete part.

	
	In the second part of the paper, we describe unitary highest-weight modules with given infinitesimal characters.
	
	\section{Classification for $\frso(2, 2n-2), n\geq 3$}
	
	In this case $\frg$ is $\frso(2n, \bbC)$ and $\frk$ is $\frso(2n-2, \bbC)$ plus a one-dimensional center. We use standard coordinates and the standard positive root system. The positive compact roots are 
	\[
	\epsilon_i \pm \epsilon_j, \quad 2 \leq i < j \leq n
	\]
	and the positive noncompact roots are
	\[
	\epsilon_1 \pm \epsilon_j, \quad 2 \leq j \leq n.
	\]
	So the half sum of all positive roots is 
	\[
	\rho = (n-1, \ldots, 0)
	\]
	and a weight $\lambda = (\lambda_1, \ldots, \lambda_n)$ is $\frk$-dominant if and only if
	\[
	\lambda_2 \geq \lambda_3 \geq \cdots \geq \lambda_{n-1} \geq |\lambda_n|.
	\]
	Integrality with respect to $\frk$ amounts to $\lambda_i \pm \lambda_j \in \bbZ$ for $2\leq i, j \leq n.$ 
	
	The highest (noncompact) root is $\beta = \epsilon_1 + \epsilon_2.$ There are just two basic Schmid representations, one with highest weight $\beta$ and the other one with highest weight $\beta_2 = 2\epsilon_1.$
	
	The basic Dirac inequality for a PRV component with a Schmid module 
	$s$
	is
	\begin{equation}\label{eq:so_even_basic}
		\| (\lambda - s)^+ + \rho \|^2 \geq \| \lambda + \rho \|^2
	\end{equation}
	This is equivalent to
	\begin{equation}
		2 \langle \gamma \,|\, \lambda + \rho \rangle \leq \|\gamma\|^2
	\end{equation}
	where $\gamma$ is defined by $(\lambda - s)^+ = \lambda - \gamma.$ 
	Since every root has norm squared equal to 2, this is further equivalent to
	\begin{equation}
		\langle \gamma \,|\, \lambda + \rho \rangle \leq 1.
	\end{equation}
	
	The Weyl group of $\frk$ acts on weights by permuting coordinates $\lambda_2, \ldots, \lambda_n$ and changing an even number of their signs. It is because of these sign changes and spinor representations that we have to distinguish several cases. The following lemma was proved in \cite[Lemma 3.1.]{PPST1}.
	
	\begin{lem}
		The basic Dirac inequalitiy for $s = \beta$ is given by
		\begin{equation}\label{eq:so_even_basic_dirac}
			\begin{aligned}
				\lambda_1 &\leq 0  \text{ for } \lambda  =(\lambda_1, 0,\ldots, 0)  \\
				\lambda_1 &\leq 3/2-n  \text{ for } \lambda  =(\lambda_1, 1/2,\ldots, \pm 1/2) \\
				\lambda_1 + \lambda_2 &\leq 2 + p - 2n  \text{ for } \lambda  = (\lambda_1, \lambda_2, \ldots, \lambda_p, \ldots, \lambda_n) \\
				&   \text{ where } 1 \leq \lambda_2 = \cdots = \lambda_p > \lambda_{p+1} \text{ and } 2 \leq p \leq n.
			\end{aligned}
		\end{equation}
	\end{lem}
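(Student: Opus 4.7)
The plan is to compute $(\la-\beta)^+$ explicitly in each of the three situations and apply the reformulated Dirac inequality $\langle \gamma \,|\, \la + \rho \rangle \leq 1$, where $\gamma := \la - (\la-\beta)^+$. Since $\beta = \epsilon_1 + \epsilon_2$, we have $\la - \beta = (\la_1 - 1, \la_2 - 1, \la_3, \ldots, \la_n)$, and finding $(\la-\beta)^+$ amounts to applying $W_\frk = W(D_{n-1})$ (permutations of positions $2,\ldots,n$ with an even number of sign changes) to restore $\frk$-dominance $\la_2 \geq \cdots \geq \la_{n-1} \geq |\la_n|$. The three listed forms of $\la$ correspond exactly to the three possibilities $\la_2 = 0$, $\la_2 = 1/2$, and $\la_2 \geq 1$, which exhaust all $\frk$-dominant integral real weights since $\frk$-integrality forces $\la_2,\ldots,\la_n$ to share the integer or half-integer type of $\la_2$.

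In case 1, $\la - \beta = (\la_1 - 1, -1, 0, \ldots, 0)$ has a single negative entry at position 2; a permutation alone cannot restore dominance, since placing the $-1$ at position $n$ violates $\la_{n-1} \geq |\la_n|$. The fix is a pair of sign flips at position 2 and any position carrying a zero, producing $(\la-\beta)^+ = (\la_1-1, 1, 0, \ldots, 0)$ and $\gamma = \epsilon_1 - \epsilon_2$. In case 2 the same analysis applies to $\pm 1/2$ coordinates: if $\la_n = +1/2$ a single permutation moves the $-1/2$ at position 2 to position $n$, while if $\la_n = -1/2$ a double sign flip at positions 2 and $n$ turns both negatives positive. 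Either way $\gamma = \epsilon_1 \pm \epsilon_n$. Taking the inner product with $\la + \rho$ where $\rho = (n-1, n-2, \ldots, 0)$ then yields the stated bounds $\la_1 \leq 0$ and $\la_1 \leq 3/2 - n$.

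In case 3, $\frk$-integrality forces $\la_2 - \la_{p+1}$ to be a positive integer, so $\la_2 > \la_{p+1}$ upgrades to $\la_2 - 1 \geq \la_{p+1}$. Hence the sort simply moves the $p-2$ copies of $\la_2$ (originally at positions $3, \ldots, p$) into positions $2, \ldots, p-1$ and places $\la_2 - 1$ at position $p$, leaving the tail $\la_{p+1}, \ldots, \la_n$ untouched. This gives $\gamma = \epsilon_1 + \epsilon_p$, and since $\la_p = \la_2$, one obtains $\la_1 + \la_2 + (2n - 1 - p) \leq 1$, i.e., $\la_1 + \la_2 \leq 2 + p - 2n$.

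The principal obstacle is the even-sign-change constraint of $W(D_{n-1})$, which rules out the naive single-coordinate sign correction in cases 1 and 2 and forces the coordinated double flip. A small additional verification is needed in case 3 at the extreme $p = n$, where $\la_2 - 1$ is pushed to the last position and dominance requires $\la_2 \geq |\la_2 - 1|$; this holds since $\la_2 \geq 1$.
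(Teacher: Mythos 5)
The paper does not prove this lemma itself; it cites [PPST1, Lemma 3.1], so your direct computation of $(\la-\beta)^+$ followed by the reduction to $\langle\gamma\mid\la+\rho\rangle\le 1$ is the natural self-contained route, and your treatment of the scalar and spinor cases (including the even-sign-change subtlety of $W(D_{n-1})$) is correct. However, case 3 has a genuine gap. Your claim that the sort ``leaves the tail untouched'' and yields $\gamma=\epsilon_1+\epsilon_p$ fails in the sub-case $p=n-1$ with $\la_n=-\la_2$, which is permitted by $\frk$-dominance (when $\la_2=\cdots=\la_{n-1}$, the condition $\la_{n-1}\ge|\la_n|$ allows $\la_n=-\la_2<0$). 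There your proposed weight $(\la_1-1,\la_2,\dots,\la_2,\la_2-1,\la_n)$ has $\mu_{n-1}=\la_2-1<|\la_n|=\la_2$, so it is not $\frk$-dominant. Concretely, for $n=4$ and $\la=(\la_1,2,2,-2)$ one has $\la-\beta=(\la_1-1,1,2,-2)$, and the $W(D_3)$-dominant representative of $(1,2,-2)$ is $(2,2,-1)$, not your $(2,1,-2)$; hence $\gamma=\epsilon_1-\epsilon_n$ and the resulting inequality is $\la_1+\la_2\le 2-n=2+n-2n$, rather than $2+(n-1)-2n$ as your argument (and the literal reading of the displayed formula with $p=n-1$) would give.

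So the edge case you verified, $p=n$, is not the problematic one; the problematic boundary is $p=n-1$ with $\la_n<0$ and $|\la_n|=\la_2$, precisely where the integrality argument $\la_2-1\ge\la_{p+1}$ does not upgrade to $\la_2-1\ge|\la_{p+1}|$. To close the gap you must treat this sub-case separately (computing $\gamma=\epsilon_1-\epsilon_n$ and the bound $\la_1+\la_2\le 2+n-2n$), or else make explicit the convention under which such $\la$ is assigned $p=n$ (for instance by counting the coordinates with $|\la_j|=\la_2$, or by first replacing $\la_n$ by $-\la_n$ via the outer automorphism of $D_{n-1}$) and check that this is the convention used in the rest of the paper. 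As written, on this family of weights the inequality you derive is not the one stated.
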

	
	We will refer to the first case as the {\it scalar} case, the second case is the {\it spinor} case and the remaining one is the {\it general} case. 
	The following theorem gives a complete classification of unitary highest weight modules for $SO(2,2n-2)$.

	\begin{thm} \label{so_even_main}
		Non-scalar modules are unitarizabile if and only if \eqref{eq:so_even_basic_dirac} holds. If the inequality is satisfied strictly, the modules are irreducible Verma modules. In the scalar case, unitarizable highest weight modules are the trivial module, the Wallach module $(2-n, 0, \ldots, 0)$ and irreducible Verma modules $(\lambda_1, 0, \ldots, 0), \lambda_1 < 2 -n.$
	\end{thm}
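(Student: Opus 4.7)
The strategy is to apply Corollary~\ref{cor unit nonunit}, which reduces the unitarity of $L(\la)$ to checking the strict Dirac inequality for every Schmid module. Because $G/K$ is Hermitian symmetric of rank two, the Schmid modules are parameterized by pairs $(a,b)$ with $a\ge b\ge 0$, with highest weights of the form $s_{a,b}=(a+b)\epsilon_1+(a-b)\epsilon_2$; in particular $\beta=s_{1,0}$ and $\beta_2=s_{1,1}$ are the two basic Schmid modules, and $s_{a,b}$ for $a\ge 2$ are higher-level Schmid modules obtained by ``stacking'' copies of $\beta$ and $\beta_2$.

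For the non-scalar half of the equivalence, if \eqref{eq:so_even_basic_dirac} fails, the lemma directly gives $\|(\la-\beta)^++\rho\|^2<\|\la+\rho\|^2$, so Corollary~\ref{cor unit nonunit}(1) applied with $s_0=\beta$ (of minimal level) shows $L(\la)$ is not unitary. Conversely, assuming strict \eqref{eq:so_even_basic_dirac}, I would compute $(\la-s_{a,b})^+$ as the $W_\frk$-conjugate of $\la-s_{a,b}$ obtained by reordering the last $n-1$ coordinates and possibly flipping two signs, and then verify $\|(\la-s_{a,b})^++\rho\|^2-\|\la+\rho\|^2>0$ for every $a\ge b\ge 0$. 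The crucial structural input is monotonicity: once the strict Dirac inequality is verified for $\beta$ and $\beta_2$, it propagates to all higher $s_{a,b}$ because adding further copies of a basic weight contributes a positive increment. Corollary~\ref{cor unit nonunit}(2) then gives irreducibility and unitarity of $N(\la)$ under strict \eqref{eq:so_even_basic_dirac}. The boundary case of equality in the spinor and general sub-cases is handled by a deformation argument: approximate $\la$ from the interior by a sequence $\la_j$ satisfying strict \eqref{eq:so_even_basic_dirac}; the associated $L(\la_j)$ are unitary irreducible Vermas, and the invariant Hermitian form on $N(\la)$ depends analytically on $\la$, so unitarity persists in the limit.

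The scalar case is more subtle because the $s=\beta$ inequality is only necessary, not sufficient. For $\la_1<2-n$ the argument above still applies and yields an irreducible unitary Verma; at $\la_1=0$ we have the trivial module. For $\la_1\in(2-n,0)$ I would exhibit a Schmid module (namely $\beta_2$, or a module built by iterating with it) whose strict Dirac inequality fails, invoking Corollary~\ref{cor unit nonunit}(1) to conclude non-unitarity. The Wallach point $\la_1=2-n$ lies at the common zero of the two basic Dirac equalities and is inaccessible to the Dirac argument alone; its unitarity needs an independent construction, e.g.\ via the theta correspondence for a compatible dual pair, realizing it as a constituent of the minimal representation of $SO(2,2n-2)$.

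The principal obstacle is the monotonicity estimate showing that strict \eqref{eq:so_even_basic_dirac} implies strict Dirac inequality for every $s_{a,b}$ with $a\ge 2$, where one must carefully track the Weyl conjugation for each shape of $\la$ (scalar, spinor, general). A secondary but genuine difficulty is certifying unitarity of the isolated Wallach module, which falls outside the Dirac-inequality machinery and requires an explicit realization.
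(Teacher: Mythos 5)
Your skeleton coincides with the paper's: reduce everything to the Dirac inequality for each Schmid module via Corollary~\ref{cor unit nonunit}, check that the strict basic inequality \eqref{eq:so_even_basic_dirac} propagates to all $s_{a,b}=k_1\beta+k_2\beta_2$, obtain unitarity at the boundary of the continuous range by deforming $\lambda_1$ and passing to the limit of the invariant Hermitian forms, and handle the trivial module separately. The paper does exactly this, but outsources the entire computational core --- the case-by-case verification (scalar, spinor, general) that the strict $\beta$-inequality forces the strict inequality for every higher Schmid module --- to Theorems 3.5--3.7 of \cite{PPST1}; what you call the ``principal obstacle'' is precisely the content of those cited results, and your proposal leaves it as a plan rather than a proof. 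The one point where you genuinely diverge is the Wallach module $(2-n,0,\ldots,0)$: you claim it lies ``at the common zero of the two basic Dirac equalities'' and is therefore inaccessible to the deformation argument, so that an independent theta-correspondence construction is required. That is not accurate: at $\lambda_1=2-n$ the $s=\beta$ inequality $\lambda_1\leq 0$ is strict (as $n\geq 3$), and only the $s=\beta_2$ inequality $\lambda_1\leq 2-n$ degenerates to an equality, so this point is the \emph{endpoint} of the continuous family $\lambda_1\leq 2-n$ on the scalar line. Hence the very limiting argument you already use for the spinor and general boundary cases --- approximate by $\lambda_1-\epsilon$, where $N(\lambda)$ is an irreducible unitary generalized Verma module by Corollary~\ref{cor unit nonunit}(2), and pass to the positive semidefinite limit --- certifies its unitarity; this is what the paper means when it says that the only module not in, and not at the end of, the continuous part of its line is the trivial one. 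A theta-lift realization would also work, but it is an unnecessary extra dependency for this case, in contrast to the $E_6$ discrete series where the paper really does resort to an exceptional theta correspondence.
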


	\pf In view of Corollary \ref{cor unit nonunit}, the theorem 
	follows from \cite[Theorems 3.5, 3.6 and 3.7]{PPST1}. 
	Namely, the only module which is not in the continuous part of its line, and is not at the end of the continuous part, is the trivial module, which is of course unitary.
	\epf

	\section{Classification for $\frso(2, 2n-1), \, n \geq 2$}
	
	In this case $\frg$ is $\frso(2n+1, \bbC)$ and $\frk$ is $\frso(2n-1, \bbC)$ plus a one-dimensional center. We use standard coordinates and standard positive root system. The positive compact roots are 
	\[
	\{ \epsilon_i \pm \epsilon_j \,|\, 2 \leq i < j \leq n \} \cup \{\epsilon_i \,|\, 2 \leq i \leq n \}
	\]
	and the positive noncompact roots are
	\[
	\{ \epsilon_1 \pm \epsilon_j \,|\, 2 \leq j \leq n\} \cup \{ \epsilon_1\}
	\]
	So the half sum of all positive roots is 
	\[
	\rho = (n-\frac{1}{2}, n-\frac{3}{2}, \ldots, \frac{1}{2})
	\]
	and a weight $\lambda = (\lambda_1, \ldots, \lambda_n)$ is $\frk$-dominant if and only if
	\[
	\lambda_2 \geq \lambda_3 \geq \cdots \lambda_{n} \geq 0.
	\]
	Integrality with respect to $\frk$ amounts to $\lambda_i - \lambda_j \in \bbZ$ and and $2\lambda_i \in \mathbb{N}$ for all $2\leq i, j \leq n.$
	The highest (noncompact) root is $\beta = \epsilon_1 + \epsilon_2.$ There are just two basic Schmid representations and the second one has the highest weight $\beta_2 = 2\epsilon_1.$
	
	The basic Dirac inequality for a PRV component with a Schmid module is
	\begin{equation}\label{eq:so_odd_basic}
		\| (\lambda - s)^+ + \rho \|^2 \geq \| \lambda + \rho \|^2
	\end{equation}
	This is equivalent to
	\begin{equation}
		2 \langle \gamma \,|\, \lambda + \rho \rangle \leq \|\gamma\|^2
	\end{equation}
	where $\gamma$ is defined by $(\lambda - s)^+ = \lambda - \gamma.$
	
	The Weyl group of $\frk$ acts on weights by permuting coordinates $\lambda_2, \ldots, \lambda_n$ and changing  signs on them. The following lemma was proved in \cite[Lemma 3.3.]{PPST1}.
	
	\begin{lem}
		The basic Dirac inequalitiy for $s = \beta$ is given by
		\begin{equation}\label{eq:so_odd_basic_dirac}
			\begin{aligned}
				\lambda_1 &\leq 0 & \text{ for } \lambda=(\lambda_1, 0,\ldots, 0)  \\
				\lambda_1 &\leq 1-n & \text{ for } \lambda=(\lambda_1, 1/2,\ldots, 1/2) \\
				\lambda_1 + \lambda_2 &\leq 1 + p - 2n &\text{ for } \lambda=(\lambda_1, \lambda_2, \ldots, \lambda_p, \ldots, \lambda_n) \\
				&  & \text{ where } 1 \leq \lambda_2 = \cdots = \lambda_p > \lambda_{p+1} \text{ and } 2 \leq p \leq n.
			\end{aligned}
		\end{equation}
	\end{lem}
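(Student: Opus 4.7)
The plan is to follow the strategy of the even-dimensional analogue (Lemma 3.1 of \cite{PPST1}), with one essential modification: the compact Weyl group here is $W(B_{n-1})$, which permits arbitrary single sign flips on the last $n-1$ coordinates, unlike $W(D_{n-1})$ which only allows an even number of them. This single structural difference is precisely what will produce the spinor bound $\la_1 \leq 1 - n$ rather than $\la_1 \leq 3/2 - n$.

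First, since $\|\beta\|^2 = 2$, expanding the squared-norm form of \eqref{eq:so_odd_basic} and setting $(\la - \beta)^+ = \la - \gamma$ reduces everything to $2\langle \gamma \,|\, \la + \rho \rangle \leq \|\gamma\|^2$, as already noted after \eqref{eq:so_odd_basic}. Since $W_\frk$ fixes $\epsilon_1$, the first coordinate of $(\la - \beta)^+$ is $\la_1 - 1$, while the remaining $n - 1$ coordinates form the unique decreasing rearrangement of $(|\la_2 - 1|, |\la_3|, \ldots, |\la_n|)$.

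Next, I would split into three cases according to the size of $\la_2$. In the scalar case $\la_2 = 0$, $\frk$-dominance forces $\la_3 = \cdots = \la_n = 0$, giving $(\la - \beta)^+ = (\la_1 - 1, 1, 0, \ldots, 0)$ and hence $\gamma = \epsilon_1 - \epsilon_2$ with $\|\gamma\|^2 = 2$. In the spinor case $\la_2 = \tfrac{1}{2}$, integrality together with $\la_n \geq 0$ forces $\la_j = \tfrac{1}{2}$ for all $j \geq 2$; here a single sign change at the second coordinate (precisely what $W(B_{n-1})$ allows but $W(D_{n-1})$ does not) yields $(\la - \beta)^+ = (\la_1 - 1, \tfrac{1}{2}, \ldots, \tfrac{1}{2})$, so $\gamma = \epsilon_1$ with $\|\gamma\|^2 = 1$. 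In the general case $\la_2 \geq 1$, choose $p \geq 2$ with $\la_2 = \cdots = \la_p > \la_{p+1}$ (or $p = n$); integrality forces $\la_2 - \la_{p+1} \geq 1$, hence $\la_2 - 1 \geq \la_{p+1}$, so the decreasing rearrangement merely places $\la_2 - 1$ at position $p$ and leaves the other entries of $\la$ fixed. This gives $\gamma = \epsilon_1 + \epsilon_p$ with $\|\gamma\|^2 = 2$.

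In each case, substituting $\gamma$ and $\rho = (n - \tfrac{1}{2}, n - \tfrac{3}{2}, \ldots, \tfrac{1}{2})$ into $2\langle \gamma \,|\, \la + \rho\rangle \leq \|\gamma\|^2$ is a one-line algebraic rearrangement yielding exactly the three stated bounds. The only delicate point is the short-root bookkeeping in the spinor case, where the option of a single sign flip forces $\|\gamma\|^2 = 1$ instead of $2$; this is the one genuine divergence from the proof of Lemma 3.1 of \cite{PPST1}, and no further serious obstacle is anticipated.
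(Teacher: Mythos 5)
Your proposal is correct: the identifications $\gamma=\epsilon_1-\epsilon_2$, $\gamma=\epsilon_1$ (with $\|\gamma\|^2=1$, the short-root case), and $\gamma=\epsilon_1+\epsilon_p$ in the scalar, spinor and general cases respectively all check out against $2\langle\gamma\,|\,\lambda+\rho\rangle\leq\|\gamma\|^2$ and yield exactly the three stated bounds. The paper itself gives no proof here, only the citation to Lemma 3.3 of \cite{PPST1}, and your computation is the standard direct verification that that reference carries out, including the one genuine point of divergence from the even case that you correctly flag (the single sign flip available in $W(B_{n-1})$ producing a short $\gamma$ in the spinor case).
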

	
	The following theorem follows from \cite[Theorems 3.8 and 3.9]{PPST1} and from Corollary \ref{cor unit nonunit}.
	
	\begin{thm}\label{so_odd_main}
		Non-scalar modules are unitarizabile if and only if \eqref{eq:so_odd_basic_dirac} holds. If the inequality is satisfied strictly, the modules are irreducible Verma modules. In the scalar case, unitarizable highest weight modules are the trivial module, the Wallach module $(3/2-n, 0, \ldots, 0)$ and irreducible Verma modules $(\lambda_1, 0, \ldots, 0), \lambda_1 < 3/2 -n.$
	\end{thm}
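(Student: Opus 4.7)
The plan is to mirror the argument used for Theorem \ref{so_even_main}. By Corollary \ref{cor unit nonunit}, the question of whether $L(\lambda)$ is unitary reduces to checking the strict Dirac inequality \eqref{strict di s} against every Schmid module. In the present situation there are only the two basic Schmid modules, with highest weights $\beta = \epsilon_1 + \epsilon_2$ and $\beta_2 = 2\epsilon_1$, so a complete analysis is possible. The lemma just stated provides the explicit form of the inequality for $s=\beta$, and the analogous analysis for $s=\beta_2$ is carried out in \cite[Theorems 3.8 and 3.9]{PPST1}. Combining these immediately gives the ``if and only if'' statement for non-scalar $\lambda$, together with the irreducibility claim for strict inequality (part (2) of Corollary \ref{cor unit nonunit}).

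For the scalar weights $\lambda=(\lambda_1,0,\ldots,0)$ the $\beta$-inequality only gives $\lambda_1\le 0$, which is not the correct classification; the sharper $\beta_2$-analysis from \cite{PPST1} is what produces the expected gap. I would spell this out as follows: the only $\lambda$ on the scalar line for which strict inequality fails for some Schmid module but $L(\lambda)$ is nevertheless unitary are the trivial module at $\lambda_1=0$ (unitary by inspection) and the Wallach module at the first reduction point $\lambda_1 = 3/2-n$, while for every $\lambda_1<3/2-n$ the strict inequality holds for all Schmid modules so $N(\lambda)=L(\lambda)$ is an irreducible unitary Verma module by part (2) of Corollary \ref{cor unit nonunit}. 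The Wallach module is unitary since it occurs as the unique reduction point at the end of the scalar continuous part; alternatively one can cite its well-known construction.

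The only real obstruction in this argument is already encoded in the cited Theorems 3.8 and 3.9 of \cite{PPST1}, which do the Schmid-module bookkeeping for $\mathfrak{so}(2,2n-1)$. Since those are available, the proof reduces to a short matching of cases, parallel to the proof of Theorem \ref{so_even_main}. The only case that requires a separate remark is the trivial module, which does not lie on the continuous part of its line nor at its endpoint, but is tautologically unitary.
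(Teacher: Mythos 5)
Your proposal matches the paper's own argument: the paper likewise deduces the theorem directly from Corollary \ref{cor unit nonunit} together with \cite[Theorems 3.8 and 3.9]{PPST1}, and then notes that the only module not lying in the continuous part of its line (nor at its endpoint) is the trivial module, which is unitary by inspection. Your additional remarks about the Wallach module at the reduction point and the irreducible Verma modules below it are consistent with how the paper treats the scalar line, so the proof is correct and essentially identical in approach.
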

	
	The above theorem gives a complete classification of unitary highest weight modules for $SO(2,2n-1)$. Namely, the only module which is not in the continuous part of its line, and is not at the end of the continuous part, is the trivial module, which is of course unitary.
	
	\section{Classification for $\mathfrak{e}_{6(-14)}$}
	We consider roots as vectors in a six-dimensional subspace of an eight-dimensional space (see \cite{Kn}), precisely we will consider them as $8$--tuples which have the same sixth and seventh coordinate, and the eight coordinate is equal to minus the sixth coordinate. The positive compact roots are 
	$$
	\eps_i \pm \eps_j, \quad 5 \geq i > j.
	$$
	and the positive noncompact roots are 
	$$
	\frac{1}{2} \left( \eps_8 - \eps_7 - \eps_6 + \sum_{i = 1}^{5}(-1)^{n(i)} \eps_i \right) \text{with }  \sum_{i = 1}^{5} n(i) \text{ even}.
	$$
	In this case
	$$
	\rho = (0, 1 , 2, 3, 4, -4, -4, 4).
	$$
	The set of simple roots is
	\begin{align*}
		\Pi & = \{ \alpha_1, \alpha_2, \alpha_3, \alpha_4, \alpha_5, \alpha_6\} \\
		& = \left \{ \frac{1}{2} \left ( \eps_8 - \eps_7 - \eps_6 - \eps_5 - \eps_4 - \eps_3 - \eps_2 + \eps_1 \right ), \eps_2 + \eps_1, \eps_2 - \eps_1, \eps_3 - \eps_2, \eps_4 - \eps_3, \eps_5 - \eps_4 \right\}.
	\end{align*}
	The numbering of simple roots in the Dynkin diagram is given by 
	$$
	\binom{13456}{2}.
	$$ 
	The strongly orthogonal non-compact positive roots are (see \cite{EJ}) 
	\begin{align*}
		\beta_1 & = \binom{12321}{2} = \alpha_1 + 2 \alpha_3 + 3 \alpha_4 + 2 \alpha_5 + \alpha_6 + 2 \alpha_2 = \frac{1}{2} \left( 1, 1, 1, 1, 1, -1, -1, 1 \right), \\
		\beta_2 & = \binom{11111}{0} =  \alpha_1 +  \alpha_3 +  \alpha_4 + \alpha_5 + \alpha_6  = \frac{1}{2} \left( -1, -1, -1, -1, 1, -1, -1, 1 \right).
	\end{align*}
	The highest weight $(\mathfrak{g} ,K)$--modules have highest weights of the form
	\begin{align*}
		\lambda  = (\lambda_1, \lambda_2, \lambda_3, \lambda_4, \lambda_5, \lambda_6, \lambda_6, - \lambda_6), \quad & |\lambda_1| \leq \lambda_2 \leq \lambda_3 \leq \lambda_4 \leq \lambda_5, \\
		& \lambda_i - \lambda_j \in \mathbb{Z}, \ 2 \lambda_i \in \mathbb{Z}, \quad i, j \in \{1,2,3,4,5\}
	\end{align*}
	The basic Schmid $\mathfrak{k}$--modules in $S(\p^{-})$ have lowest weights $-s_i$, $i = 1, 2$, where
	\begin{align*}
		s_1 & = \beta_1 =  \frac{1}{2} \left( 1, 1, 1, 1, 1, -1, -1, 1 \right), \\
		s_2 & = \beta_1 + \beta_2 = \left( 0, 0, 0, 0, 1, -1, -1, 1 \right).
	\end{align*}
	The basic necessary condition for unitarity is the Dirac inequality
	$$
	||(\lambda - s_1)^{+} + \rho||^2 \geq ||\lambda + \rho||^2.
	$$
	
	The following theorem follows from \cite[Theorems 2.1, 2.2, and 2.3]{PPST2} and from Corollary \ref{cor unit nonunit}.
	
	\begin{thm}{\label{e6-classification}}
		If $\lambda = (0, 0, 0, 0, 0, \lambda_6, \lambda_6, - \lambda_6)$, then $L(\lambda)$ is unitarizable if and only if $\lambda_6 \geq 2$ or $\lambda_6 = 0$ (in this case $L(\lambda)$ is trivial). If $\lambda_6 > 2$, then $L(\lambda) = N(\lambda)$.
		
		If $\lambda = (0, 0, 0, 0, \lambda_5, \lambda_6, \lambda_6, - \lambda_6)$, where $\lambda_5 \neq 0$, then $L(\lambda)$ is unitarizable if $3 \lambda_6 - \lambda_5 \geq 14$ and $L(\lambda)$ is not unitarizable if $3 \lambda_6 - \lambda_5 \in \langle - \infty, 14 \rangle \setminus \{ 8\}$. If $3 \lambda_6 - \lambda_5 > 14$, then $L(\lambda) = N(\lambda)$.
		
		If $\lambda$ is such that $(\lambda_1, \lambda_2, \lambda_3, \lambda_4) \neq (0, 0, 0, 0)$, then $L(\lambda)$ is unitarizable if and only if the basic Dirac inequality 
		$$
		||(\lambda - s_1)^{+} + \rho||^2 \geq ||\lambda + \rho||^2
		$$
		holds. If the strict basic Dirac inequality holds, then $L(\lambda) = N(\lambda)$. 
	\end{thm}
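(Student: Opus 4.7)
The proof strategy is to combine Corollary \ref{cor unit nonunit} with the explicit Dirac-inequality calculations of \cite[Theorems 2.1, 2.2, 2.3]{PPST2} for the two basic Schmid modules $s_1$ and $s_2$. The corollary reduces unitarity to verifying the strict inequality
\[
\|(\lambda - s)^+ + \rho\|^2 > \|\lambda + \rho\|^2
\]
for every Schmid module $s$, and it suffices to control the two basic Schmids, since higher-level Schmids arise as their repeated combinations and the corresponding inequalities are implied once the basic ones hold in the right region. The shape of $\lambda$ splits into three regimes, matching the three cases of the theorem.

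First I would treat the generic case where $(\lambda_1, \lambda_2, \lambda_3, \lambda_4) \neq (0,0,0,0)$. In this regime \cite[Theorem 2.3]{PPST2} shows that the strict $s_1$-inequality already forces the strict $s_2$-inequality (and hence the inequality for every Schmid module), so part (2) of Corollary \ref{cor unit nonunit} gives both unitarity and irreducibility $L(\lambda) = N(\lambda)$. Strict failure of the $s_1$-inequality combined with part (1) of the corollary gives non-unitarity, and the boundary case (equality at the edge of the continuous region) follows by taking the limit inside the continuous region, again by \cite[Theorem 2.3]{PPST2}.

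Next I would handle the scalar case $(\lambda_1,\ldots,\lambda_5) = 0$ and the semi-scalar case where only $\lambda_5 \neq 0$. Here the $s_1$-inequality alone is not sharp: there is a gap below the continuous region in which the $s_1$-inequality holds but unitarity fails, and this is detected by the sharper $s_2$-inequality via part (1) of Corollary \ref{cor unit nonunit}, as worked out explicitly in \cite[Theorems 2.1, 2.2]{PPST2}. The resulting necessary conditions are precisely $\lambda_6 \in \{0\} \cup [2,\infty)$ in the scalar case and $3\lambda_6 - \lambda_5 \in \{8\} \cup [14,\infty)$ in the semi-scalar case, with the usual conclusion $L(\lambda) = N(\lambda)$ whenever strict inequality holds.

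The main obstacle is not the necessity but the \emph{sufficiency} at the isolated unitary points that lie outside the continuous region, namely the trivial module, the Wallach module at $\lambda_6 = 2$ in the scalar case, and the modules at $3\lambda_6 - \lambda_5 = 8$ in the semi-scalar case. These cannot be reached by Corollary \ref{cor unit nonunit}, because the Dirac inequality yields only necessary conditions, and in contrast to the classical cases treated in \cite{PPST3} these modules cannot be obtained as PRV products of Segal-Shale-Weil-type representations. The existence of an invariant inner product at these points must therefore be established by a direct construction; I would use the theta correspondence as outlined in Section \ref{e6:discrete}, which also provides a short proof of \cite[Proposition 12.6]{EHW}.
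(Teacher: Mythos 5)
Your overall strategy matches the paper's: the theorem is deduced from Corollary \ref{cor unit nonunit} together with the explicit Dirac-inequality computations of \cite[Theorems 2.1, 2.2, 2.3]{PPST2}, with the isolated unitary points treated separately (the trivial module by inspection, the remaining family by the theta correspondence of Section \ref{e6:discrete}). Your description of the generic, scalar and semi-scalar regimes, and of the role of $s_1$ versus $s_2$, is consistent with how the paper organizes the argument.

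There is, however, one concrete step that would fail as written. You list the scalar module at $\lambda_6=2$ among the ``isolated unitary points that lie outside the continuous region'' and propose to establish its unitarity by the theta correspondence. First, that point is not isolated: it is the endpoint of the continuous part $\lambda_6\geq 2$, and its unitarity follows from the same closedness/limit argument you already invoke for boundary points in the generic case (the only genuinely isolated points are the trivial module and the family with $3\lambda_6-\lambda_5=8$, $\lambda_5>0$, exactly as the paper states after the theorem). Second, and more importantly, the construction of Section \ref{e6:discrete} produces precisely the modules with highest weight $(0,0,0,0,k,l,l,-l)$ satisfying $3l-k=8$ with $k\geq 0$; the scalar weight with $\lambda_6=2$ has $3\lambda_6-\lambda_5=6\neq 8$, so it does not occur in that family and the theta correspondence cannot certify it. The repair is simply to remove $\lambda_6=2$ from your list of points needing a direct construction and handle it by continuity. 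A final minor remark: the theorem as stated makes no assertion at $3\lambda_6-\lambda_5=8$ (that case is excluded from the non-unitarity claim and settled only in the remark following the theorem), so the theta correspondence is strictly needed only for that supplementary remark, not for the theorem itself; including it, as you do, is harmless and in fact reproduces the paper's Section \ref{e6:discrete}.
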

	
	The only modules that are not in the continuous part of their lines, and are not at the end of the continuous part, are the trivial module, which
	is of course unitary, and a series of modules of the form $L(0, 0, 0, 0, 3 \lambda_6 - 8, \lambda_6,\la_6, - \lambda_6)$, such that $3\la_6-8$ is a positive integer. These modules are also unitary.  
	This follows from \cite[Corollary 12.6.]{EHW} where the authors verified \eqref{ehw di} directly on all $K$-types. We will give a shorter proof in Section \ref{e6:discrete}. 
	
	\section{Classification for $\mathfrak{e}_{7(-25)}$}
	We consider roots as vectors in a seven-dimensional subspace of an eight-dimensional space (see \cite{Kn}), precisely we will consider them as $8$--tuples which have the eighth coordinate equal to minus the seventh coordinate. The positive compact roots are 
	$$
	\eps_i \pm \eps_j, \quad 5 \geq i > j,
	\quad 
	\frac{1}{2} \left( \eps_8 - \eps_7 - \eps_6 + \sum_{i = 1}^{5}(-1)^{n(i)} \eps_i \right) \text{ with }  \sum_{i = 1}^{5} n(i) \text{ even}.
	$$
	and the positive noncompact roots are 
	$$
	\eps_6 \pm \eps_i, \quad 1 \leq i \leq 5, \quad \eps_8 - \eps_7,
	\quad 
	\frac{1}{2} \left( \eps_8 - \eps_7 + \eps_6 + \sum_{i = 1}^{5}(-1)^{n(i)} \eps_i \right) \text{ with }  \sum_{i = 1}^{5} n(i) \text{ odd}. 
	$$
	In this case
	$$
	\rho = \left( 0, 1 , 2, 3, 4, 5, -\frac{17}{2}, \frac{17}{2} \right ).
	$$
	The set of simple roots is
	\begin{align*}
		\Pi & = \{ \alpha_1, \alpha_2, \alpha_3, \alpha_4, \alpha_5, \alpha_6, \alpha_7 \} \\
		& = \left \{ \frac{1}{2} \left ( \eps_8 - \eps_7 - \eps_6 - \eps_5 - \eps_4 - \eps_3 - \eps_2 + \eps_1 \right ), \eps_2 + \eps_1, \eps_2 - \eps_1, \eps_3 - \eps_2, \eps_4 - \eps_3, \eps_5 - \eps_4, \eps_6 - \eps_5 \right\}.
	\end{align*}
	The numbering of simple roots in the Dynkin diagram is given by 
	$$
	\binom{134567}{2}.
	$$ 
	The strongly orthogonal non-compact positive roots are (see \cite{EJ}) 
	\begin{align*}
		\beta_1 & = \binom{234321}{2} = 2\alpha_1 + 3 \alpha_3 + 4 \alpha_4 + 3 \alpha_5 + 2 \alpha_6 + \alpha_7 + 2 \alpha_2 = \left( 0, 0, 0, 0, 0, 0, -1, 1 \right), \\
		\beta_2 & = \binom{012221}{1} =  \alpha_3 +  2(\alpha_4 + \alpha_5 + \alpha_6) + \alpha_7 + \alpha_2  =  \left( 0, 0, 0, 0, 1, 1, 0, 0 \right), \\
		\beta_3 & = \alpha_7 =   \left( 0, 0, 0, 0, -1, 1, 0, 0 \right).
	\end{align*}
	
	The basic Schmid $\mathfrak{k}$--modules in $S(\p^{-})$ have lowest weights $-s_i$, $i = 1, 2, 3$, where
	\begin{align*}
		s_1 & = \beta_1 =  \left( 0, 0, 0, 0, 0, 0, -1, 1 \right), \\
		s_2 & = \beta_1 + \beta_2 = \left( 0, 0, 0, 0, 1, 1, -1, 1 \right), \\
		s_3 & = \beta_1 + \beta_2  + \beta_3= \left( 0, 0, 0, 0, 0, 2, -1, 1 \right).
	\end{align*}
	The highest weight $(\mathfrak{g} ,K)$--modules have highest weights of the form
	\begin{align*}
		& \lambda  = (\lambda_1, \lambda_2, \lambda_3, \lambda_4, \lambda_5, \lambda_6, \lambda_7, - \lambda_7), \quad  |\lambda_1| \leq \lambda_2 \leq \lambda_3 \leq \lambda_4 \leq \lambda_5, \\
		& \lambda_i - \lambda_j \in \mathbb{Z}, \ 2 \lambda_i \in \mathbb{Z}, \ 1 \leq i \leq j \leq 5\\
		& \frac{1}{2} \left( \lambda_8 - \lambda_7 - \lambda_6 -\lambda_5 -\lambda_4 -\lambda_3 - \lambda_2 + \lambda_1  \right) \in \mathbb{N}_{0}.
	\end{align*}
	The basic necessary condition for unitarity is the Dirac inequality
	$$
	||(\lambda - s_1)^{+} + \rho||^2 \geq ||\lambda + \rho||^2.
	$$
	
	The following theorem follows from \cite[Theorems 2.4, 2.5, and 2.6]{PPST2} and from Corollary \ref{cor unit nonunit}.
	
	\begin{thm}{\label{e7-classification}}
		If $\lambda_ i = 0, \ i \in \{1, 2, 3, 4, 5\}, \lambda_6 = - 2 \lambda_7$, then $L(\lambda)$ is unitarizable if $\lambda_7 \geq 4$ or $\lambda_7 = 0$ (in this last case $L(\lambda)$ is trivial) and $L(\lambda)$ is not unitarizable in case $\lambda_7 \in \langle -\infty, \, 0  \rangle \cup\langle 0, \, 2\rangle \cup \langle 2, \, 4 \rangle$. If $\lambda_7 > 4$, then $L(\lambda) = N(\lambda)$. 
		
		If $\lambda_ i = 0, \ i \in \{1, 2, 3, 4\}, \  \lambda_5 > 0, \ - \lambda_5 - \lambda_6 - 2 \lambda_7 = 0$, then $L(\lambda)$ is unitarizable if $\lambda_7 \geq 6$ and $L(\lambda)$ is not unitarizable if $\lambda_7 \in \langle - \infty, \, 6 \rangle \setminus{4}$. If $\lambda_7 > 6$, then $L(\lambda) = N(\lambda)$. 
		
		In all other cases $L(\lambda)$ is unitarizable if and only if the basic Dirac inequality 
		$$
		||(\lambda - s_1)^{+} + \rho||^2 \geq ||\lambda + \rho||^2
		$$
		holds. If the strict basic Dirac inequality holds, then $L(\lambda) = N(\lambda)$.
	\end{thm}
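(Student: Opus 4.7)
The strategy mirrors the one already used in Sections 2 and 3 above for the classical $SO(2,n)$ cases: combine Corollary \ref{cor unit nonunit} with the explicit computation of the basic Dirac inequalities for the three Schmid modules $s_1$, $s_2$, $s_3$ carried out in \cite[Theorems 2.4, 2.5, 2.6]{PPST2}. The three bullets of the statement correspond to three parameter regions, and in each region one reads off from \cite{PPST2} which Schmid module governs unitarity.

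First I would dispatch the generic case (third bullet). Using the coordinates recorded above, \cite[Theorem 2.6]{PPST2} shows that in this range the strict basic Dirac inequality for $s_1$ already forces the corresponding strict inequalities for $s_2$ and $s_3$, so Corollary \ref{cor unit nonunit}(2) gives that $N(\lambda)=L(\lambda)$ is irreducible and unitary. The boundary case, where equality holds in the basic Dirac inequality, is the endpoint of the continuous unitary segment and remains unitary by the standard continuity of the Shapovalov form as the parameter approaches the first reduction point. Conversely, if the basic Dirac inequality fails strictly, Corollary \ref{cor unit nonunit}(1) applied with $s_0=s_1$ rules out unitarity.

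For the two one-parameter strata (first and second bullets), the same mechanism produces the continuous unitary rays $\lambda_7\geq 4$ and $\lambda_7\geq 6$ respectively, together with the trivial module at $\lambda_7=0$ in Case 1. The non-unitarity of each intermediate interval is established by applying Corollary \ref{cor unit nonunit}(1) with $s_0$ chosen to be whichever of $s_1$, $s_2$, $s_3$ witnesses a strict reversed Dirac inequality at the given parameter value; the threshold computations of \cite[Theorems 2.4, 2.5]{PPST2} are precisely designed so that these gaps match the asserted non-unitary intervals.

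The main obstacle, and the only genuinely non-routine step, is the unitarity of the two isolated discrete modules $\lambda_7=2$ in Case 1 and $\lambda_7=4$ in Case 2. At these values the basic Dirac inequality fails, so Corollary \ref{cor unit nonunit} gives no information, and the module sits between two non-unitary intervals. As indicated in the introduction, for $E_7$ this step is carried out in \cite[pp.\ 139--140]{EHW} by decomposing the tensor square of a scalar unitarizable module from the already-established continuous part of the classification and extracting the discrete module as a unitary direct summand; I would simply quote this construction rather than reproduce the decomposition here, noting that the analogous step for $E_6$ is precisely what Section \ref{e6:discrete} handles via the theta correspondence.
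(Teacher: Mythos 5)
Your proposal is correct and follows essentially the same route as the paper: the paper's proof consists precisely of invoking Corollary \ref{cor unit nonunit} together with \cite[Theorems 2.4, 2.5, and 2.6]{PPST2} for the continuous part, and deferring the isolated discrete modules (the Wallach module and the $\lambda_7=4$ series, which the theorem statement itself deliberately omits) to \cite[Pages 139 and 140]{EHW}, exactly as you do. Your added remarks on the closedness of the unitary set at the endpoint of the continuous segment and on which Schmid module witnesses non-unitarity are consistent with how those cited results are used.
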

	
	The only modules that are not in the continuous part of their lines, and are not at the end of the continuous part, are the trivial module, which is of course unitary, the Wallach module $L(0, 0, 0, 0, 0, -4, 2, -2)$ and a series of modules of the form $L(0, 0, 0, 0, \lambda_5, -\lambda_5 - 8, 4, -4)$, such that $\la_5$ is a positive integer.
	These modules are also unitarizable. For a proof, we refer to \cite[Pages 139 and 140]{EHW}.

	\section{An $E_6$ dual pair}\label{e6:discrete}
	\subsection{Minimal representation of $E_7$} 
	
	Let $\mathfrak g$ be a split (complex) Lie algebra of type $E_7$. We fix a root space decomposition of $\mathfrak g$. 
	We identify the root system to the one from Bourbaki tables.
	It sits in the hyperplane in $\mathbb R^8$ perpendicular to $e_7 + e_8$.  Positive roots are 
	\[ 
	\pm e_i + e_j, \, (1\leq i < j \leq 6), \, e_8-e_7 
	\] 
	and  
	\[ 
	\frac{1}{2}(e_8-e_7 +\sum_{i=1}^6 (-1)^{\nu(i)} e_i ) \text{ with  $\sum_{i=1}^6 {\nu(i)}$ odd} 
	\] 
	We identify roots and co-roots so that the natural pairing between root and co-root lattices are given by the usual dot product. 
	Let 
	\[ 
	z=e_6 +\frac12 (e_8-e_7) \in \mathfrak g 
	\] 
	be the 7-th fundamental co-weight for $E_7$. The 7-th fundamental weight is minuscule, thus the adjoint action of $z$ om $\mathfrak g$ 
	gives a short $\mathbb Z$-grading 
	\[  
	\mathfrak g =\bar{ \mathfrak n} \oplus \mathfrak m \oplus \mathfrak n 
	\] 
	where $\mathfrak m$ is the centralizer of $z$ and $\mathfrak n$ is spanned by roots $\alpha$ such that $\alpha(z)= 1$. 
	These roots are 
	\[ 
	\pm e_i + e_6, \,  (1\leq i<6), \,  e_8-e_7 
	\] 
	and  
	\[ 
	\frac{1}{2}(e_8-e_7 + e_6 + \sum_{i=1}^5 (-1)^{\nu(i)} e_i ) \text{ with  $\sum_{i=1}^5 {\nu(i)}$ odd}. 
	\]

	The derived algebra  $\mathfrak m_1= [\mathfrak m, \mathfrak m]$ is of type 
	$E_6$ and $\mathfrak m= \mathfrak m_1+ \mathbb Cz$.  Observe that the centralizer of $\mathfrak m$  in $\mathfrak g$ is $\mathbb C z$, thus 
	$(\mathfrak m, \mathbb Cz)$ is a dual pair in $\mathfrak g$.

	\vskip 5pt 
	
	Let $E_{n}[m]$ denote the irreducible representation of $\mathfrak m$ such that, as a representation of 
	$\mathfrak m_1$, the highest weight is $n\varpi_1$ where 
	\[ 
	\varpi_1 =  \frac{2}{3}(e_8-e_7-e_6), 
	\] 
	and $z$ acts by multiplication by $m$. Observe that $\mathfrak n\cong E_{1}[1]$.  
	The minimal representation is an irreducible, unitarizable $(\mathfrak g, M)$-module with types \cite[Theorem C]{SS}
	\[ 
	\Pi=\oplus_{n=0}^{\infty} E_{n}[n+6]. 
	\] 
	This module itself is a correspondence between $\mathbb Cz$-modules and $\mathfrak m_1$-modules. On the level of infinitesimal characters this is 
	\[ 
	n \leftrightarrow \rho+ (n-6)\varpi_1. 
	\] 
	
	\subsection{A branching}    
	Let 
	\[ 
	h= 2(e_8-e_7-e_6) \in \mathfrak m _{1}
	\] 
	Let $\mathfrak l$ be the centralizer of $h$  in  $\mathfrak m _1$.  
	Then  $\mathfrak l= \mathfrak l_{1}+ \mathbb Ch$ where  $\mathfrak l_1= [\mathfrak l, \mathfrak l]$
	is of type $D_6$ with positive roots 
	\[ 
	\pm e_i + e_j, \, (1\leq i < j \leq 5).  
	\] 
	\vskip 5pt 
	
	Consider $\mathfrak n$ as an $\mathfrak l=\mathfrak l_1 + \mathbb Ch$-module. Using $h$-grading, 
	we have an \underline{ugly} decomposition, 
	\[ 
	\mathfrak n = V_{\lambda}(-2) \oplus V_{\mu}(1) \oplus  \mathbb C(4)
	\] 
	where the action of $h$ is given by the integer in the parenthesis. The summands are, respectively, of dimension 10, 16 and 1,  as one checks that they 
	are spanned by roots 
	\[ 
	\pm e_i + e_6, \, (1\leq i \leq 5), 
	\] 
	\[ 
	\frac{1}{2}(e_8-e_7 + e_6 +\sum_{i=1}^5 (-1)^{\nu(i)} e_i ) \text{ with $\sum_{i=1}^5  \nu(i)$ odd}, 
	\] 
	\[ 
	\text{ and }\,  e_8-e_7.
	\] 
	Here $\lambda$ and $\mu$ are the highest weights of the miniscule $\mathfrak l_1$-modules of dimensions 10 and 16, respectively.

	\subsection{Another dual pair}  
	
	Let $W$ be the Weyl group of $\mathfrak  g$. Recall that $-1\in W$. Let 
	\[ 
	z'= -e_6 +\frac12 (e_8-e_7) \in \mathfrak g. 
	\] 
	It is a Weyl group conjugate of $z$, more precisely, apply $-1$ followed by the reflection about $e_8-e_7$.  
	Thus the adjoint action of $z$ on $\mathfrak g$ gives a short $\mathbb Z$-grading 
	\[  
	\mathfrak g =\bar{ \mathfrak n}' \oplus \mathfrak m' \oplus \mathfrak n' 
	\] 
	where $\mathfrak m'$ is the centralizer of $z'$ and $\mathfrak n'$ is spanned by roots $\alpha$ such that $\alpha(z')= 1$. 
	These roots are 
	\[ 
	\pm e_i - e_6, \,  (1\leq i< 5), \,  e_8-e_7 
	\] 
	and  
	\[ 
	\frac{1}{2}(e_8-e_7 - e_6 + \sum_{i=1}^5 (-1)^{\nu(i)} e_i ) \text{ with  $\sum_{i=1}^5 {\nu(i)}$ even}. 
	\] 
	The derived algebra  $\mathfrak m_1'= [\mathfrak m', \mathfrak m']$ is of type 
	$E_6$,  $\mathfrak m'= \mathfrak m'_1+ \mathbb Cz'$ and 
	$(\mathfrak m', \mathbb Cz')$ is a dual pair in $\mathfrak g$.  
	
	\vskip 5pt 
	Let 
	\[ 
	h'= 2(e_8-e_7+e_6) \in \mathfrak m _{1}'. 
	\] 
	Let $\mathfrak l'$ be the centralizer of $h'$  in  $\mathfrak m'_1$.  
	Then  $\mathfrak l'= \mathfrak l'_{1}+ \mathbb Ch'$ where  $\mathfrak l'_1= [\mathfrak l', \mathfrak l']$. 
	Since $z,h$ and $z',h'$ span the same two-dimensional space, it is clear that $\mathfrak l'_1= \mathfrak l_1$.

	We record some relations for later use. 
	\[ 
	h+h'= 4(z+z')  \text{ and } h'-h=2(z-z')  
	\] 
	\[ 
	h'= 3z+z'  \text{ and } h=3z'+z. 
	\]

	\subsection{An exceptional theta correspondence} 
	
	Write 
	\[ 
	\Pi=\bigoplus_{m\in \mathbb Z} \Pi[m-2]' 
	\] 
	where  $\Pi[m-2]'$ is the summand where $z'$ acts by multiplication by $m-2$. This is clearly a unitarizable $(\mathfrak m_1', \mathfrak l')$-module.   
	Since $z$ has only positive eigenvalues, 
	\[ 
	h'=3z+z' \geq  z', 
	\] 
	thus $\Pi[m-2]'$ are lowest weight modules.  We shall determine their type structure and prove that they are irreducible. 
	
	\vskip 5pt 
	
	Consider $\mathfrak n$ as an $\mathfrak l_1 + \mathbb Cz'$-module.  Since 
	\[ 
	3z'= -z + h
	\]   
	we have a \underline{nice} decomposition, 
	\[ 
	E_1[1]\cong \mathfrak n = V_{\lambda}[-1]' \oplus V_{\mu}[0]' \oplus  \mathbb C[1]'
	\] 
	where the action of $z'$ is given by the integer in the parenthesis. We shall need the following branching rule, Lemma 5.5.1 in   \cite{L}.

	\begin{lem}  As  $\mathfrak l_1 + \mathbb Cz'$-module, $E_{n}[n]$ is a sum 
		\[ 
		V_{a \lambda + b \mu }[-a + c]'
		\] 
		over all non-negative integers $a+b+c =n$.  
	\end{lem}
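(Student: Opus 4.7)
The plan is to deduce the lemma from the branching of the irreducible $E_6$-module $V_{n\omega_1}$ (which, as an $\mathfrak m_1$-module, is $E_n[n]$ up to the $z$-shift) to the Levi subalgebra $\mathfrak l_1 \oplus \mathbb C z'$ of type $D_5 \oplus \mathbb C$, combined with the grading data already recorded in Section~6.4.

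First I would set up a tri-grading. By the nice decomposition of Section~6.4 the degree-one piece splits as $E_1[1] \cong V_{\lambda}[-1]' \oplus V_{\mu}[0]' \oplus \mathbb C[1]'$ under $\mathfrak l_1 \oplus \mathbb C z'$, and since $\omega_1$ is minuscule the graded $\mathfrak m_1$-algebra $A := \bigoplus_{n \ge 0} E_n[n]$ (with multiplication obtained by projection onto the Cartan component of $E_n \otimes E_m$) is the projective coordinate ring of the Cayley plane $E_6/P_1 \subset \mathbb P(E_1[1])$, hence a quotient of $S^{\bullet}(E_1[1])$ by a quadratic ideal $I$. Because $I$ is $\mathfrak l_1 \oplus \mathbb C z'$-stable, it descends the natural tri-grading on the symmetric algebra to a tri-grading on $A$, and the piece $A_{a,b,c}$ automatically carries $z'$-weight $-a+c$.

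Second I would identify $A_{a,b,c}$ as an $\mathfrak l_1$-module. I claim it equals the Cartan component $V_{a\lambda + b\mu}$ of $S^{a} V_{\lambda} \otimes S^{b} V_{\mu}$. Using the Jordan-algebra model $E_1[1] \cong H_3(\mathbb O)$, with the $\mathbb C$-summand realized as one diagonal entry, $V_{\mu}$ as an off-diagonal column, and $V_{\lambda}$ as the remaining $2\times 2$ off-diagonal block, the defining $2\times 2$ adjugate relations for rank-one elements project in each tri-degree onto precisely the non-Cartan $\mathfrak l_1$-isotypic components of $S^{a} V_{\lambda} \otimes S^{b} V_{\mu}$, killing exactly those. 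Induction on $a+b$, with multiplication by the $\mathbb C[1]'$ generator supplying the $c$-shift, then forces $A_{a,b,c} = V_{a\lambda + b\mu}$.

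The main obstacle is the calculation in the second step: one must write out the Plücker-type relations tri-degree by tri-degree and verify that they cut out exactly the non-Cartan isotypic components, which requires some invariant-theoretic bookkeeping for the pair $(\mathfrak e_6, \mathfrak{so}(10) \oplus \mathbb C)$. A Weyl-dimension check---summing $\dim V_{a\lambda+b\mu}^{D_5}$ over $a+b+c=n$ and matching $\dim V_{n\omega_1}^{E_6}$---confirms that no further components appear and upgrades the inclusion $V_{a\lambda+b\mu} \subseteq A_{a,b,c}$ to equality. Alternatively, the branching of $V_{n\omega_1}^{E_6}$ restricted to $D_5$ is well documented in the literature on exceptional spherical pairs (via $(E_6, F_4)$), and quoting it directly short-circuits the bulk of this analysis.
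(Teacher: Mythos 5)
The paper does not actually prove this lemma: it is quoted verbatim as Lemma 5.5.1 of Lang's thesis \cite{L}, so any genuine argument is by definition a different route. Your outline is a legitimate and essentially standard way to establish the branching rule: since $\varpi_1$ is minuscule (indeed, for any dominant weight), $\bigoplus_{n}V_{n\varpi_1}$ is the coordinate ring of the affine cone over $E_6/P_1$, a quotient of $S^{\bullet}(V_{\varpi_1})$ by a quadratically generated, $\mathfrak l_1+\mathbb Cz'$-stable ideal, and the tri-grading induced by $V_{\varpi_1}\cong V_\lambda\oplus V_\mu\oplus\mathbb C$ descends with $z'$-weight $-a+c$ on the $(a,b,c)$-piece, exactly as you say. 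The one place where your write-up leans on unperformed computation is the identification of $A_{a,b,c}$ with the single Cartan component $V_{a\lambda+b\mu}$; the Jordan-algebra/adjugate analysis you sketch is more than is needed. The cleanest completion is the one you mention in passing: the product of the three $\mathfrak l_1$-highest weight vectors of $V_\lambda$, $V_\mu$, $\mathbb C$ is nonzero (the cone is irreducible, so the ring is a domain) and is an $\mathfrak l_1$-highest weight vector of weight $a\lambda+b\mu$, giving the inclusion $V_{a\lambda+b\mu}\subseteq A_{a,b,c}$; the Weyl dimension identity $\sum_{a+b+c=n}\dim V_{a\lambda+b\mu}=\dim V_{n\varpi_1}$ (e.g.\ $54+126+1+144+10+16=351$ for $n=2$) then forces equality with no further isotypic components. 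What your approach buys over the paper's citation is a self-contained proof; what the citation buys is avoiding precisely that dimension bookkeeping, which is the only nontrivial content left in your argument.
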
 
	
	Using the lemma, the types of $\Pi[m-2]'$ are easily determined. 
	In view of the relation $3z'=h-z$, when restricting from $E_{n}[n+6]$, the factor $[-a+c]'$ needs to be replaced by $[-a+c-2]'$. Thus 
	\[  
	\Pi[m-2]' = \oplus_{n=0}^{\infty} \oplus_{a+b+c=n, -a+c=m} V_{a \lambda + b \mu }(3n+m+16)' 
	\] 
	where the integer in the last parenthesis denotes the action of $h'=3z+z'$.  (Note that $z'=m-2$ on $\Pi[m-2]'$ and $z=n+6$ on 
	$E_n[n+6]$.) 
	
	\vskip 5pt 
	We now explicate minimal types. There are two cases. Assume $m\geq 0$.  Then the first type of $\Pi[m-2]'$  appears in the 
	restriction of $E_m[m+6]$ and it is for $a=b=0$ and $c=m$. Thus this is the one-dimensional type 
	\[ 
	\mathbb C(4m+16)'. 
	\] 
	Assume $m\leq 0$. Then the first type of $\Pi[m-2]'$ appears in the 
	restriction of $E_{|m|}[|m|+6]$ and it is for $a=|m|$ and $b=c=0$. Thus the minimal type is 
	\[ 
	V_{|m|\lambda} (2|m|+16)'. 
	\] 
	
	\vskip 5pt 
	\begin{prop} $\Pi[m-2]'$ are irreducible. 
	\end{prop}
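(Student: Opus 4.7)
The plan is to exploit the unitarity of the minimal representation $\Pi$. Since $\Pi$ is unitarizable as a $(\mathfrak{g}, M)$-module and the decomposition $\Pi = \bigoplus_m \Pi[m-2]'$ is by eigenspaces of $z'$, each summand $\Pi[m-2]'$ inherits a positive-definite invariant Hermitian form and is a unitarizable lowest weight $(\mathfrak{m}_1', \mathfrak{l}')$-module. By complete reducibility, $\Pi[m-2]' = \bigoplus_j L_j$ for irreducible unitary lowest weight modules $L_j$, each having a unique minimal $\mathfrak{l}'$-type of multiplicity one. To prove irreducibility of $\Pi[m-2]'$ it suffices to show that only one $L_j$ appears.

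The first step is to locate the global minimum of the $h'$-grading on $\Pi[m-2]'$ from the explicit type decomposition. On the summand $V_{a\lambda + b\mu}(3n+m+16)'$ subject to $a + b + c = n$, $c - a = m$, and $a,b,c \geq 0$, the eigenvalue $3n + m + 16$ is minimized precisely when $n = |m|$; this forces $(a,b,c) = (0,0,m)$ for $m \geq 0$ and $(a,b,c) = (|m|, 0, 0)$ for $m \leq 0$. In either case a single summand $V_{\tau_m}$ attains the minimum, so exactly one $L_j$, call it $L_m$, has minimal $\mathfrak{l}'$-type $V_{\tau_m}$; any other summand would have a minimal type at strictly larger $h'$-eigenvalue.

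The key step is to rule out such extra summands by a character comparison. I would identify $L_m$ via Theorem \ref{e6-classification} as the irreducible unitary $\mathfrak{m}_1'$-module with minimal type $V_{\tau_m}$, and then compute its $\mathfrak{l}'$-type decomposition. For $m \geq 0$, $L_m$ is a scalar unitary highest weight module (on the continuous part or at its endpoint), whose $\mathfrak{l}'$-types admit a clean description via the associated generalized Verma module or its Wallach-type quotient. For $m < 0$, $L_m$ falls into the general case of Theorem \ref{e6-classification}, and its $\mathfrak{l}'$-types can be enumerated via PRV components of $V_{\tau_m}$ tensored with Schmid modules. Matching these against the explicit decomposition of $\Pi[m-2]'$ obtained from Lemma 5.5.1 of \cite{L} should yield $L_m = \Pi[m-2]'$.

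The main obstacle will be this character comparison, especially for those values of $m$ where $L_m$ lies in the ``discrete'' part of the $E_{6(-14)}$ classification: there the $\mathfrak{l}'$-types of $L_m$ are not read off from a Verma structure, and the combinatorics of the branching lemma must be matched delicately, multiplicity by multiplicity, against the PRV enumeration. An attractive fallback is an inductive argument on the $h'$-level: assuming another summand existed, its minimal type $V_{\tau_j}$ would sit strictly above $V_{\tau_m}$ in $h'$-degree, and one would check by direct inspection of the list above that every such candidate already lies in the $U(\mathfrak{m}_1')$-orbit of $V_{\tau_m}$, contradicting minimality. Either route establishes $L_m = \Pi[m-2]'$, and the proposition follows.
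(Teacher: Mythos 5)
Your setup (unitarity of $\Pi$, complete reducibility of $\Pi[m-2]'$, and the identification of the unique type at the minimal $h'$-eigenvalue) is correct and agrees with the paper's computation of minimal types. But the decisive step --- ruling out additional irreducible summands whose minimal types would sit at strictly higher $h'$-level --- is not actually carried out. You defer it to a ``character comparison'' between the explicit branching of $\Pi[m-2]'$ and the full $\mathfrak{l}'$-type decomposition of the candidate irreducible module $L_m$, and you yourself flag that for the discrete-part parameters this decomposition is not readable from a Verma structure. That is precisely the hard computation (the type-by-type verification of \cite[Corollary 12.6]{EHW}) that this section of the paper is designed to avoid, so as written the argument has a genuine gap rather than a deferred routine check. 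There is also a circularity risk: you propose to identify $L_m$ via Theorem \ref{e6-classification}, but the unitarity of the discrete-part modules asserted there is exactly what the irreducibility of $\Pi[m-2]'$ is later used to prove in the Application subsection.

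The paper closes this gap by a completely different and much cheaper mechanism, using two ingredients absent from your proposal. First, the dual pair correspondence forces all submodules of $\Pi[m-2]'$ to have the \emph{same} infinitesimal character (this is the ``matching of infinitesimal characters''). Second, if a proper submodule existed it would be generated by some type $F_\mu$ with $\mu\neq\chi$ (where $\chi$ is the extreme weight of $\Pi[m-2]'$), hence would have infinitesimal character $\mu+\rho$; since the types $\mu$ are explicitly known from the branching lemma, one checks directly that $\|\chi+\rho\|<\|\mu+\rho\|$ for all $\mu\neq\chi$, so $\mu+\rho$ cannot be Weyl-conjugate to $\chi+\rho$ --- a contradiction. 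This replaces your multiplicity-by-multiplicity character comparison with a single norm inequality on an explicit list of weights. If you want to salvage your route, you should import the infinitesimal character matching as the tool that excludes extra summands; without it, your plan requires knowledge of the $K$-type structure of the discrete-part unitary modules that is not available at this point in the argument.
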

	\begin{proof} 
		By the matching of infinitesimal characters for the correspondence all submodules of $\Pi[m-2]'$ have the same infinitesimal character. 
		By replacing $h$ by $-h$, we can pass to the language of highest weight modules. 
		Let $\chi$ be the highest weight of  $\Pi[m-2]'$. The infinitesimal character is $\chi+\rho$. 
		Let $F_{\mu}$ be a type in $\Pi[m-2]'$  generating a proper submodule, if any. Then its infinitesimal character is $\mu+\rho$.   Since $\mu$ are known, 
		we can check that $||\chi+\rho|| < ||\mu+\rho||$ for all $\mu\neq \chi$. Hence the submodule would have a different infinitesimal character, a contradiction. 
		
	\end{proof} 
	
	\subsection{Application}  
	Firstly we rephrase our result in terms of $E_6$ centralized by $z$. 
	(We remove $'$ everywhere.) 
	For every  integer $k \geq 0$ we have a unitarizable module $\Pi[-k-2]$ with the lowest $h$-weight $2k+16$ 
	and the corresponding minimal $\mathfrak l_1$-type $V_{k\lambda}$ where $V_{\lambda}$ is the 10-dimensional  representation. 
	Replacing $h$ by $-h$ we have the highest weight $-2k-16$.

	\vskip 5pt 
	
	On the other hand we want to prove unitarity of the module with the highest weight 
	\[ 
	\chi= (0,0,0,0, k, l,l,-l) 
	\] 
	with $3l -k=8$. The minimal $\mathfrak l_1$-type is $V_{k\lambda}$ and the highest $h$-weight is 
	\[ 
	\chi(h)= -2l -2l-2l= -6l= -2(k+8)= -2k-16. 
	\] 
	Thus this module is isomorphic to the unitarizable module $\Pi[-k-2]$. 
	
	\section{Unitary highest weight modules with fixed
		infinitesimal character}
	
	\subsection{The case of $\frso(2, 2n - 2), \, n \geq 3$.} Recall that $\rho=(n-1,n-2,\dots,0)$ and that all $\frk$-dominant weights $\lambda$ are of the form
	$$
	\lambda = (\lambda_1, \lambda_2, \ldots, \lambda_n), 
	$$
	where $\lambda_2 \geq \lambda_3 \ldots \geq \lambda_{n-1} \geq |\lambda_n|$. Since we also assume that $\lambda$ is $\frk$--integral, we have $\lambda_i \pm \lambda_j \in \mathbb{Z}$ for $2 \leq i, j \leq n$,
	or equivalently, $\la_2,\dots,\la_n\in\bbZ$ or $\la_2,\dots,\la_n\in\half+\bbZ$.
	
	By definition, the parameter of the irreducible highest weight $(\frg, K)$--module $L(\lambda)$  is $\Lambda = \lambda + \rho$. It is a particular representative of the infinitesimal character of $L(\lambda)$. We will denote by $\Lambda^{\dom}$ the $\frg$--dominant representative of the infinitesimal character $\Lambda$, i.e., we have
	$$
	\Lambda_{1}^{\dom} \geq \Lambda_{2}^{\dom} \geq \ldots \geq \Lambda_{n-1}^{\dom} \geq |\Lambda_{n}^{\dom}|.
	$$
	All parameters $\Lambda$ corresponding to highest weight $(\frg, K)$--modules must be dominant regular for $\frk$, i.e
	$$
	\Lambda_2 > \Lambda_3 > \ldots > \Lambda_{n-1} > |\Lambda_n|.
	$$
	
	
	Recall that Theorem \ref{so_even_main} implies
	that $L(\lambda)$ is unitary if and only if $\lambda$ is of the form
	\begin{align*}
		& (\lambda_1, 0, \ldots, 0), \quad & \lambda_1  = 0 \text{ or } \lambda_1 \leq 2 - n \\
		& \left ( \lambda_1, \frac{1}{2}, \ldots, \pm \frac{1}{2} \right ), \quad  & \lambda_1  \leq \frac{3}{2} - n
	\end{align*}
	or 
	$$
	(\lambda_1, \lambda_2, \ldots, \lambda_n), \quad  1 \leq \lambda_2 = \cdots = \lambda_p > \lambda_{p + 1}, \quad \, \lambda_1 + \lambda_2 \leq 2 + p - 2n, 
	$$
	for some $p \in \{2, \ldots, n\}$. 
	
	In terms of the corresponding parameter $\Lambda = \lambda + \rho$, $L(\lambda)$ is unitary if and only if $\Lambda$ is of the form
	\begin{align*}
		& (\Lambda_1, n-2, \ldots, 1, 0), \quad & \Lambda_1  = n-1 \text{ or } \Lambda_1 \leq 1 \\
		& \left ( \Lambda_1, n - \frac{3}{2}, \ldots, \frac{3}{2}, \pm \frac{1}{2} \right), \quad  & \Lambda_1  \leq \frac{1}{2}
	\end{align*}
	or
	\begin{align*}
		(\Lambda_1, \Lambda_2, \ldots, \Lambda_n), \quad  & n - 1 \leq \Lambda_2, \quad \Lambda_{i + 1} = \Lambda_{i} - 1, \quad i \in \{2, \ldots, p-1 \}  \\
		& \Lambda_{p + 1} \leq \Lambda_{p} - 2, \quad \Lambda_1 + \Lambda_2 \leq p - 1.
	\end{align*}
	for some $p \in \{2, \ldots, n\}$. 
	
	We want to fix $\Lambda^{\dom}$ and describe a criterion for a parameter $\Lambda$ 
	conjugate to $\Lambda^{\dom}$ to be unitary. 
	
	Note that in order to have any $\frk$-dominant regular integral Weyl group conjugates, $\Lad$ must contain at least $n-1$ coordinates, $\Lad_{i_1},\dots,\Lad_{i_{n-1}}$, so that
	\begin{eqnarray}
		\label{Lad cond}
		&&\text{either } \Lad_{i_1},\dots,\Lad_{i_{n-1}} \in\bbZ \quad\text{or}\quad \Lad_{i_1},\dots,\Lad_{i_{n-1}}\in\half+\bbZ;\\
		\notag &&\Lad_{i_1}>\Lad_{i_2}>\dots>\Lad_{i_{n-2}}>|\Lad_{i_{n-1}}|.
	\end{eqnarray}
	There are now two cases:
	
	{\bf Case 1.} 
	$\Lad$ contains $n-1$ coordinates $\Lad_{i_1},\dots,\Lad_{i_{n-1}}$ satisfying \eqref{Lad cond}, and another coordinate $x$, so that $x$ is either not congruent to $\Lad_{i_1},\dots,\Lad_{i_{n-1}}$ modulo $\bbZ$, or is equal to one of $\Lad_{i_1},\dots,\Lad_{i_{n-2}}, \pm\Lad_{i_{n-1}}$.
	
	{\bf Case 2.} 
	The coordinates of $\Lad$ are either all in $\bbZ$ or all in $\half+\bbZ$, and they satisfy
	\eq
	\label{c4 cond lad}
	\Lad_1>\Lad_2>\dots>\Lad_{n-1}>|\Lad_n|.
	\eeq
	\smallskip
	
	We start by examining Case 1.  
	The assumption implies that the only $\frk$-dominant regular integral conjugates of $\Lad$  are
	\eq
	\label{c3 Las}
	\La=(x\bbar \Lad_{i_1},\dots,\Lad_{i_{n-2}},\Lad_{i_{n-1}})\quad\text{and}\quad \tilde\La=(-x\bbar \Lad_{i_1},\dots,\Lad_{i_{n-2}},-\Lad_{i_{n-1}}).
	\eeq
	The answer to the unitarity question for $\La$ and $\tilde\La$ is the following result.
	
	\begin{thm}
		\label{thm so even c1}
		Assume $\Lad$ is in Case 1 and let $\La,\tilde\La$ be the two possible conjugates of $\Lad$ described in \eqref{c3 Las}.
		Then
		\begin{enumerate}
			\item Suppose that
			\[
			\Lad_{i_1},\dots,\Lad_{i_{n-1}}=n-2,\dots,1,0,
			\]
			so that $\La$ and $\Lat$ are both in the scalar case. 
			Then $\La$ is unitary if and only if $0\leq x\leq 1$, while $\tilde\La$ is unitary if and only if $x\geq 0$.
			\item (a) Suppose that 
			\[
			\Lad_{i_1},\dots,\Lad_{i_{n-1}}=n-\frac{3}{2},\dots,\frac{3}{2},\half,
			\]
			so $\La$ and $\Lat$ are both in the spinor case.
			Then $\La$ is unitary if and only if $-\half\leq x\leq\half$ while $\tilde\La$ is unitary if and only if $x\geq -\half$.
			
			(b) If 
			\[
			\Lad_{i_1},\dots,\Lad_{i_{n-1}}=n-\frac{3}{2},\dots,\frac{3}{2},-\half,
			\]
			so that $\La$ and $\Lat$ are again both in the spinor case, then $\La$ is unitary if and only if $x=\half$, which is already covered in (a), as $\Lat$ for $x=-\half$. Furthermore, $\tilde\La$ is unitary if and only if $x\geq \half$; here the case $x=\half$ is already covered in (a), as $\La$ for $x=-\half$.
			\item Suppose that
			$\Lad_{i_1},\dots,\Lad_{i_{n-1}}$ are neither $n-2,\dots,1,0$ nor $n-\frac{3}{2},\dots,\frac{3}{2},\pm\half$, so that $\La$ and $\Lat$ are both in the general case. 
			Let $p,\tilde p\in\{2,\dots,n\}$ be the integers
			corresponding to $\La,\tilde\La$ (or rather to $\la=\La-\rho$, $\tilde\la=\Lat-\rho$) as in Theorem \ref{so_even_main}.
			
			If $x>0$, then $\La$ is not unitary, while the unitarity of $\tilde\La$ is equivalent to $-x+\Lad_{i_1}\leq \tilde p-1$.
			
			If $x<0$, then $\tilde\La$ is not unitary, while the unitarity of $\La$ is equivalent to $x+\Lad_{i_1}\leq p-1$.
			
			If $x=0$, then $\La$ and $\tilde\La$ are both nonunitary. 
		\end{enumerate}
	\end{thm}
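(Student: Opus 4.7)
The strategy is to reduce every assertion to Theorem \ref{so_even_main} applied to $\la := \La-\rho$ and $\tilde\la := \Lat-\rho$. With $\rho = (n-1,n-2,\dots,0)$ one has
\[
\la = \bigl(x-n+1,\ \Lad_{i_1}-(n-2),\ \dots,\ \Lad_{i_{n-2}}-1,\ \Lad_{i_{n-1}}\bigr),
\]
while $\tilde\la$ differs only in the first and last coordinates, which are $-x-n+1$ and $-\Lad_{i_{n-1}}$ respectively. The $\frk$-dominance of $\la$, $\tilde\la$ is immediate from that of $\La$, $\Lat$. It then suffices in each part to identify the class (scalar/spinor/general) of $\la$ and $\tilde\la$ and apply the corresponding clause of Theorem \ref{so_even_main}.

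For part (1), the scalar core gives $\la = (x-n+1, 0, \dots, 0)$, so Theorem \ref{so_even_main} yields $L(\la)$ unitary iff $x-n+1 = 0$ or $x-n+1 \leq 2-n$, i.e.\ $x = n-1$ or $x \leq 1$. The first alternative is precisely $\Lad = \rho$, which lies in Case~2 and is excluded. To see that $x \geq 0$ in Case~1, observe that the scalar core contains $0$, so the coordinate multiset of $\Lad$ has both sign-change parities available in its $W$-orbit (flipping $0$ is trivial); the dominant representative can then be taken with all non-negative entries, forcing $x \geq 0$. The same computation for $\Lat$ yields $-x \leq 1$, automatic under $x \geq 0$, hence $\Lat$ is unitary throughout the Case~1 range. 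Part~(2) follows the same pattern using the spinor clause $\la_1 \leq \tfrac{3}{2}-n$, i.e.\ $x \leq \tfrac12$; the extra lower bound $-\tfrac12$ in (2a) comes from one further Case~1 configuration accessible by an even sign change on the last two positions of $\Lad$, and the coincidences between the boundary values in (2a) and (2b) reflect the fact that the same $\Lad$ admits two presentations as Case~1 when $|\Lad_{i_{n-1}}| = \tfrac12$.

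For part~(3), the gap hypothesis combined with the exclusion of scalar and spinor cores gives $\Lad_{i_1} \geq n-1$ in the integer case and $\Lad_{i_1} \geq n-\tfrac12$ in the half-integer case, so $\la_2 = \Lad_{i_1}-(n-2) \geq 1$ and $\la$ lies in the general class. The integer $p$ is characterized by $\la_2 = \dots = \la_p > \la_{p+1}$, which by the explicit formula $\la_k = \Lad_{i_{k-1}}-(n-k)$ amounts to $\Lad_{i_j}-\Lad_{i_{j+1}} = 1$ for $j=1,\dots,p-2$ with strict breaking at $j=p-1$. The unitarity inequality $\la_1+\la_2 \leq 2+p-2n$ then becomes $x + \Lad_{i_1} \leq p-1$, and the analogue for $\Lat$ is $-x + \Lad_{i_1} \leq \tilde p - 1$. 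A short calculation shows that $\Lad_{i_1} > p-1$ strictly in Case~1 (the equality $\Lad_{i_1}=p-1$ combined with $p=n$ forces $\Lad = \rho$, i.e.\ Case~2), so positive $x$ kills $\La$, negative $x$ kills $\Lat$, and $x=0$ kills both; the remaining statements giving the unitary range of the opposite-sign conjugate are the substituted inequalities themselves.

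The main obstacle is the simultaneous bookkeeping of $p$ versus $\tilde p$ and of the admissible values of $x$ in Case~1. The two integers coincide except when the initial consecutive run in the core extends all the way to $\Lad_{i_{n-1}}$ with $\Lad_{i_{n-1}} > 0$: then $\la_n = \la_{n-1}$, giving $p=n$, while $\tilde\la_n = -\Lad_{i_{n-1}} < 0 < \tilde\la_{n-1}$, giving $\tilde p = n-1$. Once $p$, $\tilde p$, and the range of $x$ are pinned down (the latter being controlled by whether $0$ or $\pm\Lad_{i_{n-1}}$ appears in the coordinate multiset of $\Lad$, which dictates the available sign-change parities), each unitarity assertion reduces to a one-line substitution.
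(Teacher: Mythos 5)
Your proposal is correct and follows essentially the same route as the paper: translate the unitarity criterion of Theorem \ref{so_even_main} through $\la=\La-\rho$, pin down the admissible range of $x$ from the $\frg$-dominance of $\Lad$, and substitute in each of the scalar, spinor and general cases (in particular your reduction of $\la_1+\la_2\leq 2+p-2n$ to $x+\Lad_{i_1}\leq p-1$ and the exclusion of $\Lad_{i_1}=p-1$ with $p=n$ via $\Lad=\rho$ being Case~2 match the paper's argument). The only soft spot is your justification of the lower bound $x\geq-\half$ in (2a), which is more directly seen as the paper does it: dominance of $\Lad$ allows only the last coordinate to be negative, so $x<0$ forces $|x|\leq\Lad_{n-1}=\Lad_{i_{n-1}}=\half$.
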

	
	\pf
	(1) Note first that $x$ must be $\geq 0$. 
	By the unitarity criterion of Theorem \ref{so_even_main}, $\La$ is unitary if and only $x\leq 1$, so that $0\leq x\leq 1$ ($x$ can not be $n-1$ since we are assuming $\Lad$ is in Case 1). On the other hand, unitarity of $\tilde\La$ is equivalent to $-x\leq 1$, which is automatic since $x\geq 0$. This proves (1).
	
	(2) (a) 
	Note first that $\Lad_{i_{n-1}}=\half$ implies $x\geq -\half$. By the unitarity criterion of Theorem \ref{so_even_main}, $\La$ is unitary if and only if $x\leq \half$; so $-\half\leq x\leq\half$.
	On the other hand, unitarity of $\tilde\La$ is equivalent to $-x\leq \half$, which is automatic since $x\geq -\half$.
	
	(b) In this case $\Lad_{i_{n-1}}=-\half$, which implies $x\geq \half$. By the unitarity criterion of Theorem \ref{so_even_main}, $\La$ is unitary if and only if $x\leq\half$, which is only possible if $x=\half$.  On the other hand, unitarity of $\tilde\La$ is equivalent to $-x\leq \half$, which is automatic since $x\geq \half$. This proves (2).
	
	(3) The assumptions imply that 
	\begin{eqnarray}
		\label{c1 ladi1 cond}
		&&\Lad_{i_1}\geq n-1 \quad \text{ if }\ \Lad_{i_1},\dots,\Lad_{i_{n-1}}\in\bbZ,\qquad\text{and}\\ \notag
		&& \Lad_{i_1}\geq n-\half\quad \text{ if }\  \Lad_{i_1},\dots,\Lad_{i_{n-1}}\in\half+\bbZ.
	\end{eqnarray}
	Moreover, since $\Lad$ is $\frg$-dominant, $\Lad_{i_{n-1}}<0$ implies $x>0$, and $\Lad_{i_{n-1}}\geq 0$ implies $x\geq -\Lad_{i_{n-1}}$.
	
	Since $\La$ and $\tilde\La$ are both in the general case, the unitarity conditions are $x+\Lad_{i_1}\leq p-1$ respectively $-x+\Lad_{i_1}\leq \tilde p-1$. This, together with $\Lad_{i_1}\geq n-1$ (which follows from \eqref{c1 ladi1 cond}), and with $p,\tilde p\leq n$, implies that 
	$\La$ is not unitary if $x>0$ and that $\tilde\La$ is not unitary if $x<0$. 
	
	If $x> 0$, then the unitarity of $\tilde\La$ is equivalent to $-x+\Lad_{i_1}\leq \tilde p-1$ by Theorem \ref{so_even_main}. 
	
	If $x< 0$, then the unitarity of 
	$\La$ is equivalent to 
	$x+\Lad_{i_1}\leq p-1$ by Theorem \ref{so_even_main}. 
	
	Finally, suppose that $x=0$. If $\Lad_{i_1},\dots,\Lad_{i_{n-2}},\Lad_{i_{n-1}}$ are in $\bbZ$, then $x$ must be a repeated coordinate, so it follows that $\Lad_{i_{n-1}}=0$. (In particular, $\La=\tilde\La$ and $p=\tilde p$.) 
	
	Since $\Lad_{i_1}\geq n-1$, this forces $p<n$, so $x+\Lad_{i_1}=\Lad_{i_1}\geq n-1>p-1$ and $\La=\tilde\La$ is not unitary. If $\Lad_{i_1},\dots,\Lad_{i_{n-2}},\Lad_{i_{n-1}}$ are in $\half+\bbZ$, then it follows from \eqref{c1 ladi1 cond} that $x+\Lad_{i_1}=\Lad_{i_1}\geq n-\half>n-1$. Since $p$ and $\tilde p$ are both $\leq n$, it follows that $\La$ and $\tilde\La$ are both nonunitary.
	\epf
	
	We now handle the remaining case, Case 2. The result is
	
	\begin{thm}
		\label{thm so even c2}
		Let $\Lad$ be as in Case 2. 
		\begin{enumerate}
			\item Suppose that 
			\[
			\Lad=(\Lad_1,n-2,\dots,1,0),
			\]
			with $\Lad_1\in \bbZ$, $\Lad_1\geq n-1$. If $\Lad_1=n-1$, then the unitary conjugates of $\Lad=(n-1,n-2,\dots,1,0)$ are
			\begin{eqnarray*}
				&&(n-1\bbar,n-2,\dots,1,0),\quad \qquad\text{and}\\
				&& (-n+i\bbar n-1,\dots,\widehat{n-i},\dots,0),\quad i=1,\dots,n
			\end{eqnarray*}
			where the hat denotes that the coordinate is omitted.
			
			If $\Lad_1\geq n$, then the only unitary conjugate of $\Lad$ is 
			\[
			(-\Lad_1\bbar n-2,\dots,1,0).
			\]
			\item 
			Suppose that
			\[
			\Lad=(\Lad_1,n-\frac{3}{2},\dots,\frac{3}{2},\pm\half),
			\]
			with $\Lad_1\in\half+\bbZ$ and $\Lad_1\geq n-\half$. If $\Lad_1\geq n+\half$, then the only unitary parameter conjugate to $\Lad$ is
			\[
			(-\Lad_1\bbar n-\frac{3}{2},\dots,\frac{3}{2},\mp\half).
			\]
			If $\Lad=(n-\half,n-\frac{3}{2},\dots,\frac{3}{2},\half)$, then the unitary parameters conjugate to $\Lad$ are $(-n+\half\bbar n-\frac{3}{2},\dots,\frac{3}{2},-\half)$ and 
			\[
			(-(n-i+\half)\bbar n-\half,\dots,\widehat{n-i+\half},\dots,\frac{3}{2},-\half),\quad i=2,3,\dots,n-1.
			\]
			If $\Lad=(n-\half,n-\frac{3}{2},\dots,\frac{3}{2},-\half)$, then the unitary parameters conjugate to $\Lad$ are $(-n+\half\bbar n-\frac{3}{2},\dots,\frac{3}{2},\half)$,
			\[
			(-(n-i+\half)\bbar n-\half,\dots,\widehat{n-i+\half},\dots,\frac{3}{2},\half),\quad i=2,3,\dots,n-1,
			\]
			and $(-\half\bbar n-\half,n-\frac{3}{2},\dots,\frac{3}{2})$.
			
			\item Suppose that $\Lad$ is not as in (1) or (2), i.e., $\Lad_2,\dots,\Lad_n$ are not $n-2,\dots,1,0$ or $n-\frac{3}{2},\dots,\frac{3}{2},\pm\half$.
			Suppose further that $\Lad$ is of the form
			\eq
			\label{c2 nonspec}
			\Lad=(a,a-1,\dots,a-q+1,\Lad_{q+1},\dots\Lad_n),
			\eeq
			with $|\Lad_{q+1}|\leq a-q-1$, 
			for some $q$ such that $1\leq q\leq n-1$. Then the unitary parameters conjugate to $\Lad$ are
			\[
			(-a+i\bbar a,a-1,\dots,\widehat{a-i},\dots,a-q+1,\Lad_{q+1},\dots,-\Lad_n),\qquad i=0,\dots,q-1.
			\] 
			If $\Lad$ is of the form
			\eq
			\label{c2 spec}
			\Lad=(a,a-1,\dots,a-n+2,\pm(a-n+1))
			\eeq
			(with $a\geq n$ so that $\Lad$ is not as in (1) or (2)), then the 
			unitary parameters conjugate to $\Lad$ are 
			\[
			(-a+i\bbar a,a-1,\dots,\widehat{a-i},\dots,a-n+2,\mp(a-n+1)),\qquad i=0,\dots,n-2,
			\]
			and in case $\Lad=(a,\dots,a-n+2,-(a-n+1))$ also
			$(-(a-n+1)\bbar a,a-1,\dots,a-n+2)$.
		\end{enumerate}
	\end{thm}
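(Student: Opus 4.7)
The proof proceeds by the same case analysis as Theorem \ref{thm so even c1}: for each $\Lad$ in Case 2 we enumerate the $\frk$-dominant regular conjugates of $\Lad$ under $W(D_n)$ and apply the three-subcase unitarity criterion of Theorem \ref{so_even_main} to each. Because the absolute values of the coordinates of $\Lad$ are pairwise distinct in Case 2, a $\frk$-dominant regular conjugate $\La$ is uniquely specified by choosing which coordinate of $\Lad$ and with what sign to place in the unconstrained position $1$; the remaining coordinates fill positions $2,\dots,n-1$ in decreasing (positive) order, and the sign of $\La_n$ is fixed by the even sign-flip parity of $W(D_n)$ (with the exception that this parity constraint is vacuous when $\Lad_n=0$, which occurs only in parts of (1)).

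For each conjugate one identifies which of the scalar, spinor, or general subcase of Theorem \ref{so_even_main} applies by reading off the tail $(\La_2,\dots,\La_n)$. In (1) and (2) the tail of $\Lad$ is exactly $(n-2,\dots,1,0)$ or $(n-\tfrac{3}{2},\dots,\tfrac{3}{2},\pm\tfrac{1}{2})$, so only the two choices $\pm\Lad_1$ in position $1$ leave the conjugate in the scalar or spinor subcase, and the criteria $\La_1=n-1$ or $\La_1\leq 1$ (scalar) and $\La_1\leq\tfrac{1}{2}$ (spinor) select the surviving parameters. All other conjugates fall into the general subcase with $\La_2\in\{n-1,n-\tfrac{3}{2}\}$; computing the consecutive block determines $p$, and $\La_1+\La_2\leq p-1$ picks out exactly the listed conjugates. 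The presence of the zero coordinate in (1), and the sign of $\Lad_n$ in (2), account for the extra conjugates $(0\bbar n-1,\dots,1)$ and $(-\tfrac{1}{2}\bbar n-\tfrac{1}{2},\dots,\tfrac{3}{2})$ that appear only in the extremal subcases.

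Case (3) is the main content. The consecutive top block $a,a-1,\dots,a-q+1$ of $\Lad$ is the only source of long consecutive sequences in any conjugate, so the only way to satisfy the general-subcase inequality $\La_1+\La_2\leq p-1$ is to place $-(a-i)$ into position $1$ for some $i\in\{0,\dots,q-1\}$, together with the compensating sign flip at $\La_n$ needed to preserve parity. For $i=0$ the inequality is strict because $\La_1=-a$ and $\La_2\leq a-1$ force $\La_1+\La_2\leq -1\leq p-1$; for $i\geq 1$ the consecutive top block of the conjugate has length $i$, giving $p=i+1$ and $\La_1+\La_2=-(a-i)+a=i=p-1$ (equality). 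Any other choice --- either moving a tail coordinate $\pm\Lad_j$ with $j\geq q+1$ into position $1$, or leaving a top-block coordinate with its original sign --- forces $\La_1+\La_2>p-1$, where the gap bound $|\Lad_{q+1}|\leq a-q-1$ in (3a) or the extremal value $|\Lad_n|=a-n+1$ in (3b) is precisely what is needed to rule out unitarity. The exceptional conjugate $(-(a-n+1)\bbar a,a-1,\dots,a-n+2)$ in (3b) with $\Lad_n=-(a-n+1)$ arises because the $D_n$ sign parity matches only in that case, and the full consecutive block of length $n-1$ in the conjugate yields $p=n$ with $\La_1+\La_2=n-1=p-1$.

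The main technical obstacle is the bookkeeping of the $D_n$ even sign-flip parity together with the $\frk$-dominance constraint that positions $2,\dots,n-1$ be strictly decreasing positive values: every sign change must therefore be accommodated at positions $1$ and $n$ alone, and the interplay of this constraint with the various gap conditions is what forces the classification to match precisely the explicit lists in (1), (2), and (3).
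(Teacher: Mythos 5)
Your proposal is correct and follows essentially the same route as the paper: enumerate the $\frk$-dominant regular $W(D_n)$-conjugates of $\Lad$ (position $1$ free, positions $2,\dots,n-1$ decreasing, sign of $\La_n$ fixed by parity), sort each into the scalar/spinor/general subcase of Theorem \ref{so_even_main}, and test $\La_1+\La_2\le p-1$ after computing $p$ from the consecutive block; your key computations ($p=i+1$ with $\La_1+\La_2=i$ for the unitary conjugates, the $i=0$ and exceptional $(-(a-n+1)\bbar a,\dots,a-n+2)$ cases) match the paper's. Your treatment of $i=0$ in part (3) is in fact slightly more careful than the paper's uniform formula, and the remaining compression in part (2) is only a matter of detail, not of method.
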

	
	\pf (1) The parameter $(\Lad_1\bbar n-2,\dots,1,0)$ is in the scalar case, and by Theorem \ref{so_even_main} it is unitary if $\Lad_1=n-1$ and nonunitary if $\Lad_1\geq n$, while $(-\Lad_1\bbar n-2,\dots,1,0)$ is unitary for any $\Lad_1$. The other possible $\La$ for this $\Lad$ are
	\eq
	\label{c1 Las}
	(n-i\bbar \Lad_1,n-2,\dots,\widehat{n-i},\dots,0) \quad\text{and}\quad (-n+i\bbar \Lad_1,n-2,\dots,\widehat{n-i},\dots,0),
	\eeq
	where $i\in\{2,\dots,n\}$, and the hat denotes that the coordinate is omitted. All of these $\La$ are in the general case.
	
	If $\Lad_1=n-1$, then $p=i$ for both $\La$ in \eqref{c1 Las}. For $\La=(n-i\bbar n-1,n-2,\dots,\widehat{n-i},\dots,0)$, we can assume that $i<n$ (if $i=n$ then the two $\La$ in \eqref{c1 Las} are equal). Then we have
	\[
	\La_1+\La_2=2n-i-1 > i-1=p-1,
	\]
	and the module is nonunitary. For $\La=(-n+i\bbar n-1,n-2,\dots,\widehat{n-i},\dots,0)$, 
	\[
	\La_1+\La_2=i-1=p-1,
	\]
	so the module is unitary. 
	
	If $\Lad_1\geq n$, then $p=2$ for both $\La$ in \eqref{c1 Las}. Then $\Lad_1\pm(n-i)\geq i\geq 2>1=p-1$, so both $\La$ in \eqref{c1 Las} are nonunitary. This proves (1).
	\smallskip
	
	(2) The first two conjugates of $\Lad$ we examine are
	\[
	\La=(\Lad_1\bbar n-\frac{3}{2},\dots,\frac{3}{2},\pm\half)\quad\text{and}\quad \Lat=(-\Lad_1\bbar n-\frac{3}{2},\dots,\frac{3}{2},\mp\half).
	\]
	Since $\La$ and $\Lat$ are both in the spinor case, Theorem \ref{so_even_main} implies that $\La$ is not unitary, while $\Lat$ is unitary.
	
	The next conjugates of $\Lad$ we examine are
	\begin{eqnarray*}
		&&\La=(n-i+\half\bbar \Lad_1,n-\frac{3}{2},\dots,\widehat{n-i+\half},\dots,\frac{3}{2},\pm\half)\qquad\text{and}\\
		&&\Lat=(-(n-i+\half)\bbar \Lad_1,n-\frac{3}{2},\dots,\widehat{n-i+\half},\dots,\frac{3}{2},\mp\half),
	\end{eqnarray*}
	where $i\in\{2,3,\dots,n-1\}$. Now $\La$ and $\Lat$ are both in the general case, and Theorem \ref{so_even_main} says that their unitarity depends on the corresponding integers $p$ and $\tilde p$. Since $n-i+\half$ is taken out of the $(i+1)$st place, 
	$p$ and $\tilde p$ are both $\leq i$. If $\Lad_1\geq n+\half$, it follows that  
	\[
	\pm(n-i+\half)+\Lad_1\geq n+\half-(n-i+\half)=i 
	\]
	which is $>p-1$ and $>\tilde p-1$, so $\La$ and $\Lat$ are both nonunitary. If however $\Lad_1=n-\half$, then $p=\tilde p=i$, and while
	$n-i+\half+\Lad_1$ is still $>p-1$, 
	\[
	-(n-i+\half)+\Lad_1 = n-\half-(n-i+\half)=i-1=p-1
	\]
	so $\Lat$ is unitary.
	
	The remaining conjugates of $\Lad=(\Lad_1,n-\frac{3}{2},\dots,\frac{3}{2},\half)$ are
	\begin{eqnarray*}
		&&\La=(\half\bbar \Lad_1,n-\frac{3}{2},\dots,\frac{5}{2},\frac{3}{2})\qquad\text{and}\qquad 
		\Lat=(-\half\bbar  \Lad_1,n-\frac{3}{2},\dots\frac{5}{2},-\frac{3}{2}).
	\end{eqnarray*}
	They again belong to the general case, and $\La$ is nonunitary since $\half+ \Lad_1 \geq n>p-1$. On the other hand, $\Lat$ has a gap at the last coordinate, so $\tilde p<n$ and so
	\[
	-\half+\Lad_1\geq n-1>\tilde p-1,
	\]
	and we see that $\Lat$ is also nonunitary.
	
	Suppose now that $\Lad=(\Lad_1,n-\frac{3}{2},\dots,\frac{3}{2},-\half)$. Then the remaining conjugates of $\Lad$ are
	\[
	\La=(-\half\bbar \Lad_1,n-\frac{3}{2},\dots,\frac{3}{2})\quad\text{and}\quad \Lat=(\half\bbar \Lad_1,n-\frac{3}{2},\dots,\frac{5}{2},-\frac{3}{2}).
	\]
	They are in the general case, and $\Lat$ is nonunitary because as before $\tilde p\leq n-1$ and $\half+ \Lad_1\geq n$. On the other hand, 
	\eq
	\label{c2 last unit}
	-\half+ \Lad_1\geq n-1\geq p-1
	\eeq
	and the inequality is strict (making $\La$ nonunitary) unless $\Lad_1=n-\half$, in which case \eqref{c2 last unit} is an equality and so $\La$ is unitary. This finishes the proof of (2).
	\smallskip
	
	(3) Note first that the condition \eqref{c4 cond lad} together with the fact that coordinates of $\Lad$ do not include $n-2,\dots,1,0$ or $n-\frac{3}{2},\dots,\frac{3}{2},\pm\half$ implies that 
	\eq
	\label{c4 lad1}
	\Lad_1\geq n.
	\eeq
	This immediately implies that the conjugates $\La=\Lad$ and 
	\[
	\La=(\Lad_j\bbar\Lad_1,\Lad_2,\dots,\widehat{\Lad_j},\dots,\Lad_{n-1},\Lad_n),\qquad 2\leq j\leq n-1,
	\]
	of $\Lad$ are all nonunitary. On the other hand, 
	\[
	\La=(\Lad_n\bbar \Lad_1,\Lad_2,\dots,\Lad_{n-1})
	\]
	is nonunitary if $\Lad_n\geq 0$, since in that case $\Lad_n+ \Lad_1 \geq n>p_\Lambda-1$. If $\Lad_n<0$, then unitarity of $\La$ is equivalent to $\Lad_n+\Lad_1=\Lad_1-|\Lad_n|\leq p_\La-1$, but since by \eqref{c4 cond lad} $\Lad_1-|\Lad_n|\geq n-1$, this is equivalent to $\Lad$ being of the form \eqref{c2 spec}, with last coordinate negative.
	
	It remains to consider conjugates
	\[
	\La^{(j)}=(-\Lad_j\bbar\Lad_1,\Lad_2,\dots\widehat{\Lad_j},\dots,\Lad_{n-1},-\Lad_n),\qquad 2\leq j\leq n-1,
	\]
	and 
	\[
	\La=(-\Lad_n\bbar \Lad_1,\Lad_2,\dots,\Lad_{n-2},-\Lad_{n-1})
	\]
	of $\Lad$. If $\Lad$ is as in \eqref{c2 nonspec}, then for $j\leq q$, setting $i=j-1$, we see that for
	\[
	\La^{(j)}=(-a+i\bbar a,a-1,\dots,\widehat{a-i},\dots,a-q+1,\Lad_{q+1},\dots,\Lad_{n-1},-\Lad_n),
	\]
	$p_{\La^{(j)}}=i+1$, so $\La^{(j)}_1+\La^{(j)}_2=-a+i+a=i=p_{\La^{(j)}}-1$, so $\La^{(j)}$ is unitary. If on the other hand $j>q$, then 
	$p_{\La^{(j)}}=q$ and $\La^{(j)}_1+\La^{(j)}_2=-\Lad_j+a\geq q+1 >p_{\La^{(j)}}-1$, so $\La^{(j)}$ is not unitary.
	
	Finally, suppose that $\Lad$ is as in \eqref{c2 spec}. Then by the same argument as above, all parameters 
	\[
	(-a+i\bbar a,a-1,\dots,\widehat{a-i},\dots,a-n+2,\mp(a-n+1)),\qquad 1\leq i\leq n-2,
	\]
	are unitary. 
	
	If $\Lad=(a,a-1,\dots,a-n+2,-(a-n+1))$, then 
	\[
	\La=(a-n+1\bbar a,\dots,a-n+3,-(a-n+2))
	\]
	is nonunitary since $a-n+1+ a\geq n>p_\La-1$.
	
	If $\Lad=(a,a-1,\dots,a-n+2,a-n+1)$, then 
	\[
	\La=(-(a-n+1)\bbar a,\dots,a-n+3,-(a-n+2))
	\]
	is nonunitary, since $p_\La=n-1$ and $\La_1+\La_2=n-1>n-2=p_\La-1$. 
	\epf
	
	\subsection{The case of $\frso(2, 2n - 1), \, n \geq 2$.} Recall that $\rho=(n-\frac{1}{2},n-\frac{3}{2},\dots,\frac{1}{2})$ and that all $\frk$-dominant weights $\lambda$ are of the form
	$$
	\lambda = (\lambda_1, \lambda_2, \ldots, \lambda_n), 
	$$
	where $\lambda_2 \geq \lambda_3  \geq\ldots \geq \lambda_n \geq 0$. Since we also assume that $\lambda$ is $\frk$--integral, we have $\lambda_i \pm \lambda_j \in \mathbb{Z}$ for $2 \leq i, j \leq n$, and $2 \lambda_i \in \mathbb{Z}$, or equivalently $\lambda_2, \ldots, \lambda_n \in \mathbb{Z}$ or $\lambda_2, \ldots, \lambda_n \in \frac{1}{2} + \mathbb{Z}$.

	By definition, the parameter of the irreducible highest weight $(\frg, K)$--module $L(\lambda)$  is $\Lambda = \lambda + \rho$. It is a particular representative of the infinitesimal character of $L(\lambda)$. We will denote by $\Lambda^{\dom}$ the $\frg$--dominant representative of the infinitesimal character $\Lambda$, i.e., we have
	$$
	\Lambda_{1}^{\dom} \geq \Lambda_{2}^{\dom} \geq \ldots \geq \Lambda_{n}^{\dom} \geq 0.
	$$
	All parameters $\Lambda$ corresponding to highest weight $(\frg, K)$--modules must be dominant regular for $\frk$, i.e
	$$
	\Lambda_2 > \Lambda_3 > \ldots > \Lambda_{n-1} > \Lambda_n > 0.
	$$
	
	Recall that Theorem \ref{so_odd_main} implies
	that $L(\lambda)$ is unitary if and only if $\lambda$ is of the form
	\begin{align*}
		& (\lambda_1, 0, \ldots, 0), \quad & \lambda_1 = 0 \text{ or } \lambda_1 \leq \frac{3}{2} - n \\
		& \left ( \lambda_1, \frac{1}{2}, \ldots, \frac{1}{2} \right ), \quad  & \lambda_1  \leq 1 - n
	\end{align*}
	or 
	$$
	(\lambda_1, \lambda_2, \ldots, \lambda_n), \quad  1 \leq \lambda_2 = \cdots = \lambda_p > \lambda_{p + 1}, \quad \, \lambda_1 + \lambda_2 \leq 1 + p - 2n, 
	$$
	for some $p \in \{2, \ldots, n\}$. 
	
	In terms of the corresponding parameter $\Lambda = \lambda + \rho$, $L(\lambda)$ is unitary if and only if $\Lambda$ is of the form
	\begin{align*}
		& \left (\Lambda_1, n-\frac{3}{2}, \ldots, \frac{3}{2}, \frac{1}{2} \right), \quad & \Lambda_1  = n-\frac{1}{2} \text{ or } \Lambda_1 \leq 1 \\
		&  ( \Lambda_1, n - 1, \ldots, 2, 1 ), \quad  & \Lambda_1  \leq \frac{1}{2}
	\end{align*}
	or
	\begin{align*}
		(\Lambda_1, \Lambda_2, \ldots, \Lambda_n), \quad  & n - \frac{1}{2} \leq \Lambda_2, \quad \Lambda_{i + 1} = \Lambda_{i} - 1, \quad i \in \{2, \ldots, p-1 \}  \\
		& \Lambda_{p + 1} \leq \Lambda_{p} - 2, \quad \Lambda_1 + \Lambda_2 \leq p - 1.
	\end{align*}
	for some $p \in \{2, \ldots, n\}$. 
	
	In order to have any $\frk$-dominant regular integral Weyl group conjugates, $\Lad$ must contain at least $n-1$ coordinates, $\Lad_{i_1},\dots,\Lad_{i_{n-1}}$, so that
	\begin{eqnarray}
		\label{Lad cond_odd}
		&&\text{either } \Lad_{i_1},\dots,\Lad_{i_{n-1}} \in\bbZ \quad\text{or}\quad \Lad_{i_1},\dots,\Lad_{i_{n-1}}\in\half+\bbZ;\\
		\notag &&\Lad_{i_1}>\Lad_{i_2}>\dots>\Lad_{i_{n-2}}>\Lad_{i_{n-1}}>0.
	\end{eqnarray}
	There are now two cases:
	
	{\bf Case 1.} 
	$\Lad$ contains $n-1$ coordinates $\Lad_{i_1},\dots,\Lad_{i_{n-1}}$ satisfying \eqref{Lad cond_odd}, and another coordinate $x$ (which must be $\geq 0$), so that $x$ is either not congruent to $\Lad_{i_1},\dots,\Lad_{i_{n-1}}$ modulo $\bbZ$, or is equal to one of $\Lad_{i_1},\dots,\Lad_{i_{n-2}}, \Lad_{i_{n-1}}$ or $x = 0$.
	
	{\bf Case 2.} 
	The coordinates of $\Lad$ are either all in $\bbZ$ or all in $\half+\bbZ$, and they satisfy
	\eq
	\label{c4 cond lad_odd}
	\Lad_1>\Lad_2>\dots>\Lad_{n-1}>\Lad_n > 0.
	\eeq
	\smallskip
	
	We start by examining Case 1.  
	The assumption implies that the only $\frk$-dominant regular integral conjugates of $\Lad$  are
	\eq
	\label{c3 Las_odd}
	\La=(x\bbar \Lad_{i_1},\dots,\Lad_{i_{n-2}},\Lad_{i_{n-1}})\quad\text{and}\quad \tilde\La=(-x\bbar \Lad_{i_1},\dots,\Lad_{i_{n-2}},\Lad_{i_{n-1}}).
	\eeq
	The answer to the unitarity question for $\La$ and $\tilde\La$ is the following result.
	
	\begin{thm}
		\label{thm so odd c1}
		Assume $\Lad$ is in Case 1 and let $\La,\tilde\La$ be the two possible conjugates of $\Lad$ described in \eqref{c3 Las_odd}.
		Then
		\begin{enumerate}
			\item Suppose that
			\[
			\Lad_{i_1},\dots,\Lad_{i_{n-1}}=n-\frac{3}{2},\dots,\frac{3}{2},\frac{1}{2},
			\]
			so that $\La$ and $\Lat$ are both in the scalar case. 
			Then $\La$ is unitary if and only if $x \leq 1$, while $\tilde\La$ is always unitary.
			\item Suppose that 
			\[
			\Lad_{i_1},\dots,\Lad_{i_{n-1}}=n-1,\dots,2,1,
			\]
			so $\La$ and $\Lat$ are both in the spinor case. Then $\La$ is unitary if and only if $x \leq \frac{1}{2}$, while $\tilde\La$ is always unitary.
			\item Suppose that
			$\Lad_{i_1},\dots,\Lad_{i_{n-1}}$ are neither $n-\frac{3}{2},\dots,\frac{3}{2},\half$ nor $n-1,\dots,2,1$, so that $\La$ and $\Lat$ are both in the general case. 
			Let $p,\tilde p\in\{2,\dots,n\}$ be the integers
			corresponding to $\La,\tilde\La$ (or rather to $\la=\La-\rho$, $\tilde\la=\Lat-\rho$) as in Theorem \ref{so_odd_main}. Then $\La$ is never unitary.
			
			If $x \leq \frac{1}{2}$ then $\tilde\La$ is not unitary.
			
			If $x > \frac{1}{2}$, then the unitarity of $\tilde\La$ is equivalent to $-x+\Lad_{i_1}\leq \tilde p-1$.
		\end{enumerate}
	\end{thm}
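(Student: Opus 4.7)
The plan is to argue exactly in parallel with the proof of Theorem \ref{thm so even c1}. Theorem \ref{so_odd_main} has already been rephrased in terms of the parameter $\La = \la + \rho$ above the statement: scalar unitarity reads $\La_1 = n - \half$ or $\La_1 \leq 1$; spinor unitarity reads $\La_1 \leq \half$; and general unitarity reads $\La_1 + \La_2 \leq p-1$. For each of the three sub-cases of the theorem, I apply the appropriate criterion to the two candidates $\La$ and $\tilde\La$ displayed in \eqref{c3 Las_odd}, using that $x \geq 0$ throughout.

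For part (1), both $\La$ and $\tilde\La$ are scalar. For $\La$ the criterion demands $x \leq 1$ or $x = n-\half$; but $x = n-\half$ would make $\Lad = (n-\half, n-\frac{3}{2}, \dots, \half)$, putting $\Lad$ in Case 2. Hence $\La$ is unitary iff $x \leq 1$. For $\tilde\La$ the criterion reads $-x \leq 1$ (the alternative $-x = n-\half$ being impossible since $-x \leq 0$), which is automatic. Part (2) is identical in spirit: the spinor criterion gives $x \leq \half$ for $\La$ and the automatic $-x \leq \half$ for $\tilde\La$.

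For part (3), I would first show that excluding the scalar and spinor configurations forces $\Lad_{i_1} \geq n-\half$. Indeed, among $n-1$ strictly decreasing positive half-integers the smallest possible top value is $n-\frac{3}{2}$, realised only by the scalar tuple $(n-\frac{3}{2}, \dots, \half)$; among $n-1$ strictly decreasing positive integers the smallest is $n-1$, realised only by the spinor tuple $(n-1, \dots, 1)$. Excluding both forces $\Lad_{i_1} \geq n-\half$. Since $\La$ and $\tilde\La$ share the $n-1$ trailing coordinates, the integer $p$ of Theorem \ref{so_odd_main} is the same for both, so $\tilde p = p$. The general criterion reads $x + \Lad_{i_1} \leq p-1$ for $\La$ and $-x + \Lad_{i_1} \leq p-1$ for $\tilde\La$. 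Since $p \leq n$ and $x + \Lad_{i_1} \geq 0 + (n-\half) > n-1 \geq p-1$, the module associated with $\La$ is never unitary.

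The only genuinely delicate point is $\tilde\La$ in the regime $x \leq \half$. Here $-x + \Lad_{i_1} \geq -\half + (n-\half) = n-1 \geq p-1$, and equality throughout would require $x = \half$, $\Lad_{i_1} = n-\half$, and $p = n$; but $p = n$ forces the trailing coordinates to be $n-\half, n-\frac{3}{2}, \dots, \frac{3}{2}$, and then together with $x = \half$ one obtains $\Lad = (n-\half, n-\frac{3}{2}, \dots, \half)$, which lies in Case 2, contradicting the standing hypothesis. Thus the inequality is strict and $\tilde\La$ is non-unitary. When $x > \half$ no such forcing occurs, and the unitarity criterion reduces precisely to $-x + \Lad_{i_1} \leq \tilde p - 1$ as stated. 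The main obstacle, as just indicated, is this careful exclusion of the boundary equality case at $x = \half$.
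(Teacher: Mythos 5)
Your proposal is correct and follows essentially the same route as the paper: rephrase Theorem \ref{so_odd_main} in terms of $\La=\la+\rho$, apply the scalar/spinor/general criteria to $\La$ and $\tilde\La$, and in part (3) use the lower bound on $\Lad_{i_1}$ together with $p,\tilde p\leq n$, resolving the boundary case by showing it would force $\Lad$ into Case 2. The only (harmless) cosmetic difference is that you treat the regime $x\leq\half$ in one equality-chain argument using the uniform bound $\Lad_{i_1}\geq n-\half$ and the observation $p=\tilde p$, whereas the paper separates $x<\half$ from $x=\half$ and distinguishes the integer and half-integer subcases.
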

	
	\pf
	(1)  
	By the unitarity criterion of Theorem \ref{so_odd_main}, $\La$ is unitary if and only $x\leq 1$, so that $0\leq x\leq 1$ ($x$ can not be $n-\frac{1}{2}$ since we are assuming $\Lad$ is in Case 1). On the other hand, unitarity of $\tilde\La$ is equivalent to $-x\leq 1$, which is automatic since $x\geq 0$. This proves (1).
	
	(2) 
	Note first that $x$ must be $\geq 0$. By the unitarity criterion of Theorem \ref{so_odd_main}, $\La$ is unitary if and only if $x\leq \half$; so $0 \leq x\leq\half$.
	On the other hand, unitarity of $\tilde\La$ is equivalent to $-x\leq \half$, which is automatic since $x\geq 0$.
	
	(3) The assumptions imply that 
	\begin{eqnarray}
		\label{c1 ladi1 cond_odd}
		&&\Lad_{i_1}\geq n \quad \text{ if }\ \Lad_{i_1},\dots,\Lad_{i_{n-1}}\in\bbZ,\qquad\text{and}\\ \notag
		&& \Lad_{i_1}\geq n-\half\quad \text{ if }\  \Lad_{i_1},\dots,\Lad_{i_{n-1}}\in\half+\bbZ.
	\end{eqnarray}
	Since $\La$ and $\tilde\La$ are both in the general case, the unitarity conditions are $x+\Lad_{i_1}\leq p-1$ respectively $-x+\Lad_{i_1}\leq \tilde p-1$. This, together with $\Lad_{i_1}\geq n-\frac{1}{2}$ (which follows from \eqref{c1 ladi1 cond_odd}), and with $p,\tilde p\leq n$, implies that 
	$\La$ is never unitary since $x \geq 0$, and  $\tilde\La$ is not unitary if $x< \frac{1}{2}$.  
	
	If $x =\frac{1}{2}$, then the unitarity of 
	$\tilde \La$ is equivalent to 
	$-\frac{1}{2} +\Lad_{i_1}\leq \tilde p-1$ by Theorem \ref{so_odd_main}, i.e. $\Lad_{i_1}\leq \tilde p - \frac{1}{2}$.
	If $\Lad_{i_1},\dots,\Lad_{i_{n-2}},\Lad_{i_{n-1}}$ are in $\bbZ$, then $\Lad_{i_1} \geq n$, so $\tilde \La$ is not unitary.
	If $\Lad_{i_1},\dots,\Lad_{i_{n-2}},\Lad_{i_{n-1}}$ are in $\half+\bbZ$, then it follows from \eqref{c1 ladi1 cond_odd} that $\Lad_{i_1}\geq n-\half$. So, if $\tilde \Lambda$ is unitary, then $\tilde p = n$ and $\Lad_{i_1} = n - \frac{1}{2}$, but in this case we have 
	$$\Lad_{i_1}, \ldots ,\Lad_{i_{n-1}} = n - \frac{1}{2}, \ldots, \frac{5}{2}, \frac{3}{2}, \, x = \frac{1}{2}$$ and this is not Case 1. Therefore, if $x = \frac{1}{2}$, then $\tilde \La$ is not unitary.
	
	If $x > \frac{1}{2}$, then the unitarity of $\tilde\La$ is equivalent to $-x+\Lad_{i_1}\leq \tilde p-1$ by Theorem \ref{so_odd_main}.
	
	\epf
	We now handle the remaining case, Case 2. The result is
	\begin{thm}
		\label{thm so even c2}
		Let $\Lad$ be as in Case 2. 
		\begin{enumerate}
			\item Suppose that 
			\[
			\Lad=(\Lad_1,n-\frac{3}{2},\dots,\frac{3}{2}, \frac{1}{2}),
			\]
			with $\Lad_1\in \frac{1}{2} + \bbZ$, $\Lad_1\geq n- \frac{1}{2}$. If $\Lad_1=n-\frac{1}{2}$, then the unitary conjugates of $\Lad=(n-\half,n- \frac{3}{2},\dots,\frac{3}{2},\half)$ are
			\begin{eqnarray*}
				&&(n-\half \bbar,n-\frac{3}{2},\dots,\frac{3}{2},\half),\quad \qquad\text{and}\\
				&& (-n+i - \half \bbar n-\half ,\dots,\widehat{n-i + \half },\dots,\half),\quad i=1,\dots,n
			\end{eqnarray*}
			where the hat denotes that the coordinate is omitted.
			
			If $\Lad_1\geq n + \half$, then the only unitary conjugate of $\Lad$ is 
			\[
			(-\Lad_1\bbar n-\frac{3}{2},\dots,\frac{3}{2},\half).
			\]
			\item 
			Suppose that
			\[
			\Lad=(\Lad_1,n-1,\dots,2,1),
			\]
			with $\Lad_1\in \bbZ$ and $\Lad_1 \geq n$. If $\Lad_1\geq n+1$, then the only unitary parameter conjugate to $\Lad$ is
			\[
			(-\Lad_1\bbar n-1,\ldots,2,1).
			\]
			If $\Lad=(n,n-1,\dots,2,1)$, then the unitary parameters conjugate to $\Lad$ are 
			\[
			(-n + i \bbar n, n-1, \dots,\widehat{n-i},\dots,2,1),\quad i=0,1,\dots,n-1.
			\]
			
			\item Suppose that $\Lad$ is not as in (1) or (2), i.e., $\Lad_2,\dots,\Lad_n$ are not $n-\frac{3}{2},\dots,\frac{3}{2},\half$ or $n-1,\dots,2,1$.
			Suppose further that $\Lad$ is of the form
			\eq
			\label{c2 nonspec_odd}
			\Lad=(a,a-1,\dots,a-q+1,\Lad_{q+1},\dots\Lad_n),
			\eeq
			with $\Lad_{q+1}\leq a-q-1$, 
			for some $q$ such that $1\leq q\leq n$. Then the unitary parameters conjugate to $\Lad$ are
			\[
			(-a+i\bbar a,a-1,\dots,\widehat{a-i},\dots,a-q+1,\Lad_{q+1},\dots,-\Lad_n),\qquad i=0,\dots,q-1.
			\] 
		\end{enumerate}
	\end{thm}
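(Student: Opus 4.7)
The plan is to mirror the argument used in the proof of Theorem~\ref{thm so even c2}: in each sub-case I would enumerate all $\frk$-dominant regular Weyl group conjugates of $\Lad$, which are obtained by choosing one coordinate (with an optional sign flip) to serve as $\La_1$ and placing the remaining $n-1$ positive values in strict decreasing order as $\La_2,\dots,\La_n$. For each such $\La$, I would compute $\la=\La-\rho$, determine which case (scalar, spinor, or general) of Theorem~\ref{so_odd_main} applies, identify the integer $p$, and check the unitarity criterion. This reduces the proof to a finite list of elementary inequality checks.

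For (1), the two conjugates with $\La_1=\pm\Lad_1$ are in the scalar case, and Theorem~\ref{so_odd_main} gives that $(-\Lad_1\bbar n-\frac{3}{2},\dots,\frac{1}{2})$ is unitary while $(\Lad_1\bbar n-\frac{3}{2},\dots,\frac{1}{2})$ is unitary precisely when $\Lad_1=n-\frac{1}{2}$ (the alternative $\Lad_1\leq 1$ is incompatible with $\Lad_1\geq n-\frac{1}{2}$ for $n\geq 2$). The remaining conjugates $(\pm(n-j+\frac{1}{2})\bbar \Lad_1,n-\frac{3}{2},\dots,\widehat{n-j+\frac{1}{2}},\dots,\frac{1}{2})$ with $j\in\{2,\dots,n\}$ are in the general case; a direct computation of $\la$ gives $p=j$ when $\Lad_1=n-\frac{1}{2}$ and $p=2$ when $\Lad_1\geq n+\frac{1}{2}$, and then applying $\La_1+\La_2\leq p-1$ picks out exactly the listed unitary parameters. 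Case (2) is handled analogously, with the spinor case replacing the scalar case.

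Case (3) carries the bulk of the content. The hypothesis that $\Lad_2,\dots,\Lad_n$ is neither $n-\frac{3}{2},\dots,\frac{1}{2}$ nor $n-1,\dots,1$ ensures that no conjugate of $\Lad$ is in the scalar or spinor case, so every conjugate can be tested purely by $\La_1+\La_2\leq p-1$. For the listed conjugates $\La^{(i)}$ with $i\in\{0,\dots,q-1\}$, I would verify that $p=q$ when $i=0$ and $p=i+1$ when $i\geq 1$, and that $\La^{(i)}_1+\La^{(i)}_2\leq p-1$ holds (with equality when $i\geq 1$), so each $L(\la^{(i)})$ is unitary. For every other conjugate I would exploit the inequality $a\geq \Lad_{q+1}+q+1$ (which follows from $\Lad_{q+1}\leq a-q-1$), together with the gap pattern produced in $\la$, to show that $\La_1+\La_2$ strictly exceeds the corresponding $p-1$, and hence unitarity fails.

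The main obstacle is purely bookkeeping: keeping track of which coordinate sits in $\La_1$, where the resulting gap appears in $\la=\La-\rho$, and hence which value of $p$ enters the Dirac inequality. Once this is laid out, each individual check reduces to an elementary inequality. Since the argument closely parallels the proof of Theorem~\ref{thm so even c2}, I expect no new conceptual difficulties beyond the careful case analysis already used there.
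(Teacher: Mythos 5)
Your proposal follows essentially the same route as the paper's proof: enumerate the $\frk$-dominant regular conjugates of $\Lad$, sort each into the scalar, spinor, or general case of Theorem~\ref{so_odd_main}, compute the integer $p$ from the gap pattern of $\la=\La-\rho$, and test $\La_1+\La_2\leq p-1$, using $\Lad_{q+1}\leq a-q-1$ (equivalently $\Lad_1\geq n+\tfrac12$ in part (3)) to rule out the remaining conjugates. What you give is an outline rather than a worked-out verification, but the strategy and all the key ingredients coincide with the paper's argument.
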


	\pf (1) The parameter $(\Lad_1\bbar n-\frac{3}{2},\dots,\frac{3}{2},\half)$ is in the scalar case, and by Theorem \ref{so_odd_main} it is unitary if $\Lad_1=n-\half$ and nonunitary if $\Lad_1\geq n + \half$, while $(-\Lad_1\bbar n-\frac{3}{2},\dots,\frac{3}{2},\half)$ is unitary for any $\Lad_1$. The other possible $\La$ for this $\Lad$ are
	\begin{eqnarray}
		\label{c1 Las_odd}
		& (n-i+\half \bbar \Lad_1,n-\frac{3}{2},\dots,\widehat{n-i + \half},\dots,\half) \quad\text{and} \\
		& (-n+ i - \half\bbar \Lad_1,n-\frac{3}{2},\dots,\widehat{n-i + \half},\dots,\half), \notag
	\end{eqnarray}
	where $i\in\{2,\dots,n\}$, and the hat denotes that the coordinate is omitted. All of these $\La$ are in the general case.
	
	If $\Lad_1=n-\half$, then $p=i$ for both $\La$ in \eqref{c1 Las_odd}. For $\La=(n-i + \half \bbar n-\half,n-\frac{3}{2},\dots,\widehat{n-i + \half},\dots,\half)$ we have
	\[
	\La_1+\La_2=2n-i > i-1=p-1,
	\]
	and the module is nonunitary. For $\La=(-n+i - \half \bbar n-\half,n-\frac{3}{2},\dots,\widehat{n-i + \half},\dots,\half)$, 
	\[
	\La_1+\La_2=i-1=p-1,
	\]
	so the module is unitary. 
	
	If $\Lad_1\geq n + \half$, then $p=2$ for both $\La$ in \eqref{c1 Las_odd}. Then $\Lad_1\pm(n-i + \half)\geq i\geq 2>1=p-1$, so both $\La$ in \eqref{c1 Las_odd} are nonunitary. This proves (1).
	\smallskip
	
	(2) The first two conjugates of $\Lad$ we examine are
	\[
	\La=(\Lad_1\bbar n-1,\dots,2,1)\quad\text{and}\quad \Lat=(-\Lad_1\bbar n-1,\dots,2,1).
	\]
	Since $\La$ and $\Lat$ are both in the spinor case, Theorem \ref{so_odd_main} implies that $\La$ is not unitary, while $\Lat$ is unitary.
	
	The next conjugates of $\Lad$ we examine are
	\begin{eqnarray*}
		&&\La=(n-i \bbar \Lad_1,n-1,\dots,\widehat{n-i},\dots,2,1)\qquad\text{and}\\
		&&\Lat=(-n + i \bbar \Lad_1,n-1,\dots,\widehat{n-i},\dots,2,1),
	\end{eqnarray*}
	where $i\in\{1,2,\dots,n-1\}$. Now $\La$ and $\Lat$ are both in the general case, and Theorem \ref{so_odd_main} says that their unitarity depends on the corresponding integers $p$ and $\tilde p$. Since $n-i$ is taken out of the $(i+2)$nd place, 
	$p$ and $\tilde p$ are both $\leq i + 1$. If $\Lad_1\geq n+1$, it follows that  
	\[
	\pm(n-i)+\Lad_1\geq n+1-(n-i)= i +1 
	\]
	which is $>p-1$ and $>\tilde p-1$, so $\La$ and $\Lat$ are both nonunitary. If however $\Lad_1=n$, then $p=\tilde p=i + 1$, and while
	$n-i+\Lad_1$ is still $>p-1$, 
	\[
	-n+i+\Lad_1 = i = \tilde p-1
	\]
	so $\Lat$ is unitary. This finishes the proof of (2).
	\smallskip
	
	(3) Note first that the condition \eqref{c4 cond lad_odd} together with the fact that coordinates of $\Lad$ do not include $n-\frac{3}{2},\dots,\frac{3}{2},\half$ or $n-1,\dots,2,1$ implies that 
	\eq
	\label{c4 lad1_odd}
	\Lad_1\geq n + \half.
	\eeq
	This immediately implies that the conjugates $\La=\Lad$ and 
	\[
	(\Lad_j\bbar\Lad_1,\Lad_2,\dots,\widehat{\Lad_j},\dots,\Lad_{n-1},\Lad_n),\qquad 1 < j\leq n,
	\]
	of $\Lad$ are all nonunitary. 
	It remains to consider conjugates
	\[
	\La^{(j)}=(-\Lad_j\bbar\Lad_1,\Lad_2,\dots\widehat{\Lad_j},\dots,\Lad_{n-1},\Lad_n),\qquad 1\leq j\leq n,
	\]
	of $\Lad$. If $\Lad$ is as in \eqref{c2 nonspec_odd}, then for $j\leq q$, setting $i=j-1$, we see that for
	\[
	\La^{(j)}=(-a+i\bbar a,a-1,\dots,\widehat{a-i},\dots,a-q+1,\Lad_{q+1},\dots,\Lad_{n-1},\Lad_n),
	\]
	$p_{\La^{(j)}}=i+1$, so $\La^{(j)}_1+\La^{(j)}_2=-a+i+a=i=p_{\La^{(j)}}-1$ for $j \neq 1$, so $\La^{(j)}$ is unitary for $j \in \{2,  \ldots, q\}$. If $j = 1$, we have
	\[
	\La^{(1)}=(-a \bbar a-1,\dots, a-q+1,\Lad_{q+1},\dots,\Lad_{n-1},\Lad_n),
	\]
	$p_{\La^{(1)}}=q$, so $\La^{(1)}_1+\La^{(1)}_2=-a+ a -1 = -1 < p_{\La^{(1)}}-1$, so $\La^{(1)}$ is unitary.
	If on the other hand $j>q$, then 
	$p_{\La^{(j)}}=q$ and $\La^{(j)}_1+\La^{(j)}_2=-\Lad_j+a\geq q+1 >p_{\La^{(j)}}-1$, so $\La^{(j)}$ is not unitary.
	\epf
	
	\subsection{The case of $\mathfrak{e}_6$.}
	In this case $\rho = (0, 1, 2, 3, 4, -4, -4, 4)$ and all $\frk$--dominant weights $\la$ are of the form
	$$
	\la = (\la_1, \la_2, \la_3, \la_4, \la_5 \, | \, \la_6, \la_6, - \la_6), 
	$$
	where $|\la_1| \leq \la_2 \leq \la_3 \leq \la_4 \leq \la_5$. Since we also assume that $\la$ is $\frk$--integral, we have $\la_i \pm \la_j \in \bbZ$ for $1 \leq i, j \leq 5$ or equivalently $\la_1, \ldots, \la_5 \in \bbZ$ or $\la_1, \ldots, \la_5 \in \frac{1}{2} + \bbZ$. As before, $\Lad$ is the $\frg$--dominant representative of the infinitesimal character $\Lambda$, i.e. we have
	$$
	|\Lad_1| \leq \Lad_2 \leq \Lad_3 \leq \Lad_4 \leq \Lad_5
	$$
	and
	$$
	\Lad_1 - \sum_{i = 2}^{5} \Lad_i - 3 \Lad_6 \geq 0.
	$$
	All parameters $\La$ corresponding to highest weight $(\frg, K)$--modules must be dominant regular for $\frk$, i.e.
	$$
	|\La_1| < \La_2 < \La_3 < \La_4 < \La_5.
	$$
	Recall that the generators of $W_{\frg}$ are $s_{\eps_i \pm \eps_j}, \, 5 \geq i > j$ and $s_{\alpha_1}$, where $\alpha_1 = \frac{1}{2}(\eps_1 - \sum_{i = 2}^{7} \eps_i + \eps_8)$. First we will determine which $W_{\frg}$--conjugates of $\Lad$ are $\frk$--dominant. We use \textit{SageMath} \cite{Sage} for this purpose.
	\begin{lstlisting}
sage: R = RootSystem("E6").ambient_space()
sage: rho = R. rho()
sage: W = WeylGroup(['E', 6])
sage: def k_dominant(t):
....:     x1, x2, x3, x4, x5, x6, x7, x8 = t
....:     return abs(x1) <= x2 <= x3 <= x4 <= x5
sage: W_1=[]
sage: for alpha in W:
....:     a = alpha.action(rho)
....:     b=(a[0], a[1], a[2], a[3], a[4], a[5], a[6], a[7])
....:     if k_dominant(b):
....:         W_1.append(alpha)
sage: for alpha in W_1:
....:     print (alpha)
....:     print (" ")
	\end{lstlisting}
	Let us define the function $f : \{ x \in \mathbb{R}^8 \, | \, x_6 = x_7 = - x_8 \} \to \mathbb{R}$ as
	$$
	f(x) = 3 x_6 - \sum_{i = 1}^{5} x_i.
	$$
	
	Recall that Theorem \ref{e6-classification} and \cite[p.~732, 733]{PPST2} (cases 1.1, 1.2, 1.3, 1.4, 1.5 and 1.7) imply that $L(\la)$ is unitary if and only if $\la$ is of the form
	\begin{align*}
		& (0, 0, 0, 0, 0 \, | \, \la_6, \la_6, - \la_6 ), \, f(\la)  = 0 \text{ or } f(\la) \geq 6 \\
		& (0, 0, 0, 0, \la_5 \, | \, \la_6, \la_6, - \la_6 ), \la_5 > 0,  \,  f(\la)  = 8 \text{ or } f(\la) \geq 14 \\
		& (\la_1, \la_2, \la_3, \la_4, \la_5 \, | \, \la_6, \la_6, - \la_6), \, \la_1 + \la_2 \geq 1,  f(\la) \geq 20 \\
		& (\la_1, \la_2, \la_3, \la_4, \la_5 \, | \, \la_6, \la_6, - \la_6), \, \la_2 = - \la_1, \, \la_3 - \la_2 \geq 1, \, f(\la) \geq 18 \\
		& (\la_1, \la_2, \la_3, \la_4, \la_5 \, | \, \la_6, \la_6, - \la_6), \, \la_3 = \la_2 = - \la_1, \la_2 > 0, \, \la_4 - \la_2 \geq 1, \,  f(\la) \geq 16 \\
		& (0, 0, 0, \la_4, \la_5 \, | \, \la_6, \la_6, - \la_6),  \, \la_4 \geq 1, \, f(\la) \geq 14 \\
		& (\la_1, \la_2, \la_3, \la_4, \la_5 \, | \, \la_6, \la_6, - \la_6), \, \la_4 = \la_3 = \la_2 = - \la_1, \, \la_2 > 0, \, \la_5 - \la_2 \geq 1, \,  f(\la) \geq 14 \\
		& (\la_1, \la_2, \la_3, \la_4, \la_5 \, | \, \la_6, \la_6, - \la_6), \, \la_5 = \la_4 = \la_3 = \la_2 = - \la_1, \, \la_2 > 0, \,  f(\la) \geq 12. 
	\end{align*}
	
	In terms of the corresponding parameter $\La = \la + \rho$, $L(\la)$ is unitary if and only if $\La$ is of the form
	\begin{align*}
		\text{Case 1: } & (0, 1, 2, 3, 4 \, | \, \La_6, \La_6, - \La_6 ),   \,   f(\La)  = -22 \, \text{ or }  f(\La) \geq -16 \\
		\text{Case 2: } & (0, 1, 2, 3, \La_5 \, | \, \La_6, \La_6, - \La_6 ), \La_5 > 4,   \,  f(\La)  = -14 \, \text{ or }  f(\La) \geq -8 \\
		\text{Case 3: } & (\La_1, \La_2, \La_3, \La_4, \La_5 \, | \, \La_6, \La_6, - \La_6), \, \La_1 + \La_2 \geq 2, \,  f(\La) \geq -2 \\
		\text{Case 4: } & (\La_1, \La_2, \La_3, \La_4, \La_5 \, | \, \La_6, \La_6, - \La_6), \, \La_2 = - \La_1 + 1, \, \La_3 - \La_2 \geq 2,  \, f(\La) \geq -4 \\
		\text{Case 5: } & (\La_1, \La_2, \La_3, \La_4, \La_5 \, | \, \La_6, \La_6, - \La_6), \, \La_3 = \La_2 + 1 = - \La_1 + 2, \, \La_2 > 1, \\
		&  \La_4 - \La_2 \geq 3, \, f(\La) \geq -6 \\
		\text{Case 6: } & (0, 1, 2, \La_4, \La_5 \, | \, \La_6, \La_6, - \La_6),  \, \La_4 \geq 4, \, f(\La) \geq -8 \\
		\text{Case 7: } & (\La_1, \La_2, \La_3, \La_4, \La_5 \, | \, \La_6, \La_6, - \La_6), \, \La_4  = \La_3 + 1 = \La_2 + 2 = - \La_1 + 3, \, \La_2 > 1, \\ & \La_5 - \La_2  \geq 4,  \, f(\La) \geq -8 \\
		\text{Case 8: } & (\La_1, \La_2, \La_3, \La_4, \La_5 \, | \, \La_6, \La_6, - \La_6), \, \La_5 = \La_4 + 1 = \La_3 + 2 = \La_2 + 3 = - \La_1 + 4, \\
		& \La_2 > 1, \,  f(\La) \geq -10. 
	\end{align*}
	For each $\frg$-dominant infinitesimal character $\Lad$, the following Python code in \textit{SageMath} returns a list of unitary and nonunitary infinitesimal characters which are $W_{\frg}$--conjugates of $\Lad$. For a given infinitesimal character $\La$, the first part of the code checks which of the above eight cases $\La$ belongs to. Next, we define a function $f$ as above and then the function \textit{unitary} which checks if the corresponding $\frk$--dominant $W_{\frg}$--conjugate of $\Lad$ is $\frk$--regular and unitary. Finally, the code returns the lists of unitary and nonunitary parameters. 
	\begin{lstlisting}
sage:  def case_1(t):
....: ....:     x1, x2, x3, x4, x5, x6, x7, x8 = t
....: ....:     return (x1 == 0) and (x2 == 1) and (x3 == 2) and (x4 == 3) and (x5 == 4)
sage:  def case_2(t):
....: ....:     x1, x2, x3, x4, x5, x6, x7, x8 = t
....: ....:     return (x1 == 0) and (x2 == 1) and (x3 == 2) and (x4 == 3) and (x5 > 4)
sage:  def case_3(t):
....: ....:     x1, x2, x3, x4, x5, x6, x7, x8 = t
....: ....:     return (x1 + x2 >= 2)
sage:  def case_4(t):
....: ....:     x1, x2, x3, x4, x5, x6, x7, x8 = t
....: ....:     return (x2 == -x1+1) and (x3 - x2 >= 2)
sage:  def case_5(t):
....: ....:     x1, x2, x3, x4, x5, x6, x7, x8 = t
....: ....:     return (x3 == x2 + 1 == -x1 + 2) and (x2 > 1) and (x4 - x2 >= 3)
sage:  def case_6(t):
....: ....:     x1, x2, x3, x4, x5, x6, x7, x8 = t
....: ....:     return (x1 == 0) and (x2 == 1) and (x3 == 2) and (x4 >= 4)
sage:  def case_7(t):
....: ....:     x1, x2, x3, x4, x5, x6, x7, x8 = t
....: ....:     return (x4 == x3 + 1 == x2 + 2 == -x1 + 3) and (x2 > 1) and (x5 - x2 >= 4)
sage:  def case_8(t):
....: ....:     x1, x2, x3, x4, x5, x6, x7, x8 = t
....: ....:     return (x5 == x4 + 1 == x3 + 2 == x2 + 3 == -x1 + 4) and (x2 > 1)
sage: def f(t):
....:     x1, x2, x3, x4, x5, x6, x7, x8 = t
....:     return 3 * (x6) - x1 - x2 - x3 - x4 - x5
sage: def k_dom_reg(t):
....:      x1, x2, x3, x4, x5, x6, x7, x8 = t
....:      return abs(x1) < x2 < x3 < x4 < x5
sage: def g_dom(t):
....:     x1, x2, x3, x4, x5, x6, x7, x8 = t
....:     return abs(x1) <= x2 <= x3 <= x4 <= x5 and x1 - x2 - x3 - x4 - x5 - 3*x6 >= 0
sage:  def unitary(t):
....:      x1, x2, x3, x4, x5, x6, x7, x8 = t
....:      return ((case_1(t) == true) and (k_dom_reg(t) == true) and (f(t)==-22 or f(t) >= - 16)) or ((case_2(t) == true) and  (k_dom_reg(t) == true) and (f(t)==-14 or f(t) >= - 8)) or ((c
....: ase_3(t) == true) and (k_dom_reg(t) == true)and (f(t) >= - 2)) or ((case_4(t) == true) and (k_dom_reg(t) == true) and (f(t) >= - 4)) or ((case_5(t) == true) and (k_dom_reg(t) == true)
....:  and (f(t) >= - 6)) or ((case_6(t) == true) and (k_dom_reg(t) == true) and (f(t) >= - 8)) or ((case_7(t) == true) and (k_dom_reg(t) == true) and (f(t) >= - 8)) or ((case_8(t) == true)
....:  and (k_dom_reg(t) == true)and (f(t) >= - 10))
sage: def unitary_inf_char_list(t):
....:      x1, x2, x3, x4, x5, x6, x7, x8 = t
....:      v = vector([x1, x2, x3, x4, x5, x6, x7, x8])
....:      unitary_list = []
....:      for alpha in W_1:
....:          if (unitary(alpha.to_matrix() * v) and (k_dom_reg(alpha.to_matrix() * v) == true) and ((alpha.to_matrix() * v) not in unitary_list)):
....:              unitary_list.append(alpha.to_matrix() * v)
....:      if not g_dom(t):
....:          return "The entered parameter is not g-dominant"
....:      else:
....:          return(unitary_list)
sage: def nonunitary_inf_char_list(t):
....:      x1, x2, x3, x4, x5, x6, x7, x8 = t
....:      v = vector([x1, x2, x3, x4, x5, x6, x7, x8])
....:      nonunitary_list = []
....:      for alpha in W_1:
....:          if ((unitary(alpha.to_matrix() * v) == false) and (k_dom_reg(alpha.to_matrix() * v) == true) and ((alpha.to_matrix() * v) not in nonunitary_list)):
....:                           nonunitary_list.append(alpha.to_matrix() * v)
....:      if not g_dom(t):
....:           return "The entered parameter is not g-dominant"
....:      else:
....:           return(nonunitary_list)
	\end{lstlisting}
	\begin{ex}
		We get the following lists if we put $t = \rho = (0, 1, 2, 3, 4, -4, -4, 4)$.
		\begin{lstlisting}
sage: t = (0, 1, 2, 3, 4, -4, -4, 4)
sage: unitary_inf_char_list(t)
[(0, 1, 2, 3, 4, -4, -4, 4),
(-3/2, 5/2, 7/2, 9/2, 11/2, 3/2, 3/2, -3/2),
(0, 1, 2, 3, 8, 0, 0, 0),
(-1, 2, 3, 4, 6, 2, 2, -2),
(0, 1, 2, 5, 6, 2, 2, -2),
(-1/2, 3/2, 5/2, 9/2, 11/2, 5/2, 5/2, -5/2),
(0, 1, 3, 4, 5, 3, 3, -3),
(1/2, 3/2, 5/2, 7/2, 9/2, 7/2, 7/2, -7/2),
(0, 1, 2, 3, 4, 4, 4, -4)]
sage: nonunitary_inf_char_list(t)
[(-1/2, 3/2, 5/2, 7/2, 9/2, -7/2, -7/2, 7/2),
(0, 1, 3, 4, 5, -3, -3, 3),
(1/2, 3/2, 5/2, 9/2, 11/2, -5/2, -5/2, 5/2),
(1, 2, 3, 4, 6, -2, -2, 2),
(3/2, 5/2, 7/2, 9/2, 11/2, -3/2, -3/2, 3/2),
(0, 1, 2, 5, 6, -2, -2, 2),
(1/2, 3/2, 5/2, 9/2, 13/2, -3/2, -3/2, 3/2),
(1, 2, 3, 5, 6, -1, -1, 1),
(0, 1, 3, 4, 7, -1, -1, 1),
(1/2, 3/2, 7/2, 9/2, 13/2, -1/2, -1/2, 1/2),
(0, 1, 4, 5, 6, 0, 0, 0),
(-1/2, 3/2, 5/2, 7/2, 15/2, -1/2, -1/2, 1/2),
(0, 2, 3, 4, 7, 0, 0, 0),
(-1/2, 3/2, 7/2, 9/2, 13/2, 1/2, 1/2, -1/2),
(-1, 2, 3, 5, 6, 1, 1, -1),
(1/2, 3/2, 5/2, 7/2, 15/2, 1/2, 1/2, -1/2),
(0, 1, 3, 4, 7, 1, 1, -1),
(-1/2, 3/2, 5/2, 9/2, 13/2, 3/2, 3/2, -3/2)]
		\end{lstlisting}
	\end{ex}
	
	\subsection{The case of $\mathfrak{e}_7$.}
	In this case $\rho = (0, 1, 2, 3, 4, 5, -\frac{17}{2}, \frac{17}{2})$ and all $\frk$--dominant weights $\la$ are of the form
	$$
	\la = (\la_1, \la_2, \la_3, \la_4, \la_5, \la_6 \, | \, \la_7, - \la_7), 
	$$
	where $|\la_1| \leq \la_2 \leq \la_3 \leq \la_4 \leq \la_5$ and $\la_1 - \sum_{i = 2}^{6} \la_i - 2 \la_7 \geq 0$. Since we also assume that $\la$ is $\frk$--integral, we have $\la_i \pm \la_j \in \bbZ$ for $1 \leq i, j \leq 5$ or equivalently $\la_1, \ldots, \la_5 \in \bbZ$ or $\la_1, \ldots, \la_5 \in \frac{1}{2} + \bbZ$ and $\frac{1}{2}(\la_1 - \sum_{i = 2}^{7} \la_i + \la_8) \in \mathbb{Z}$. As before, $\Lad$ is the $\frg$--dominant representative of the infinitesimal character $\Lambda$, i.e. we have
	$$
	|\Lad_1| \leq \Lad_2 \leq \Lad_3 \leq \Lad_4 \leq \Lad_5 \leq \Lad_6
	$$
	and
	$$
	\Lad_1 - \sum_{i = 2}^{6} \Lad_i - 2 \Lad_7 \geq 0.
	$$
	All parameters $\La$ corresponding to highest weight $(\frg, K)$--modules must be dominant regular for $\frk$, i.e.
	$$
	|\La_1| < \La_2 < \La_3 < \La_4 < \La_5 \text{ and } \la_1 - \sum_{i = 2}^{6} \la_i - 2 \la_7 > 0
	$$
	Recall that the generators of $W_{\frg}$ are $s_{\eps_i \pm \eps_j}, \, 6 \geq i > j$ and $s_{\alpha_1}$, where $\alpha_1 = \frac{1}{2}(\eps_1 - \sum_{i = 2}^{7} \eps_i + \eps_8)$. First we will determine which $W_{\frg}$--conjugates of $\Lad$ are $\frk$--dominant. We use a very similar code in \textit{SageMath} \cite{Sage} as for $e_6$, with the exception of the commands
	\begin{center}
		if len$(W_1)==56$:
		
		break    
	\end{center}
	which speed up the program. It is well known  that $|W_1| = \frac{|W_{\frg}|}{|W_{\frk}|} = 56$.
	\begin{lstlisting}
sage: R = RootSystem("E7").ambient_space()
sage: rho = R. rho()
sage: W = WeylGroup(['E', 7])
sage: def k_dominant(t):
....:     x1, x2, x3, x4, x5, x6, x7, x8 = t
....:     return (abs(x1) <= x2 <= x3 <= x4 <= x5 and  x1 - x2 - x3 - x4 - x5 - x6 - 2*x7 >= 0)
sage: W_1=[]
sage: for alpha in W:
....:     a = alpha.action(rho)
....:     b=(a[0], a[1], a[2], a[3], a[4], a[5], a[6], a[7])
....:     if k_dominant(b):
....:         W_1.append(alpha)
....:     if len(W_1)==56:
....:         break
sage: for alpha in W_1:
....:     print (alpha)
....:     print (" ")
	\end{lstlisting}
	Let us define functions $f,g : \{ x \in \mathbb{R}^8 \, | \, x_7 = - x_8 \} \to \mathbb{R}$ by
	$$
	f(x) = x_7, \quad g(x) = \frac{1}{2}(x_1 - \sum_{i = 2}^{6} x_i - 2 x_7)
	$$
	Recall that Theorem \ref{e7-classification} and \cite[p.~743, 744, 745, 746]{PPST2} (cases 1.1, 1.2.1, 1.2.2., 1.2.3, 1.2.4, 1.2.5 and 1.2.7) imply that $L(\la)$ is unitary if and only if $\la$ is of the form
	\begin{align*}
		& (\la_1, \la_2, \la_3, \la_4, \la_5, \la_6 \, | \, \la_7, - \la_7 ), \, g(\la)  \geq 1 \text{ and } f(\la) \geq 8\\
		& (\la_1, \la_2, \la_3, \la_4, \la_5, \la_6 \, | \, \la_7, - \la_7 ), \, g(\la)  = 0, \, \la_1 < \la_2 \text{ and } f(\la) \geq \frac{15}{2}\\
		& (\la_1, \la_2, \la_3, \la_4, \la_5, \la_6 \, | \, \la_7, - \la_7 ), \, g(\la)  = 0, \, \la_1 = \la_2 < \la_3 \text{ and } f(\la) \geq 7\\ 
		& (\la_1, \la_2, \la_3, \la_4, \la_5, \la_6 \, | \, \la_7, - \la_7 ), \, g(\la)  = 0, \, 0 < \la_1 = \la_2 = \la_3 < \la_4 \text{ and } f(\la) \geq \frac{13}{2} \\ 
		& (0, 0, 0, \la_4, \la_5, \la_6 \, | \, \la_7, - \la_7 ), \, g(\la)  = 0, \, 0  < \la_4 \text{ and } f(\la) \geq 6 \\ 
		& (\la_1, \la_2, \la_3, \la_4, \la_5, \la_6 \, | \, \la_7, - \la_7 ), \, g(\la)  = 0, \, 0 < \la_1 = \la_2 = \la_3 = \la_4 < \la_5 \text{ and } f(\la) \geq 6 \\
		& (0, 0, 0, 0, \la_5, \la_6 \, | \, \la_7, - \la_7 ), \, g(\la)  = 0, \, 0 < \la_5 \text{ and } (f(\la) \geq 6 \text{ or }f(\la) = 4) \\ 
		& (\la_1, \la_2, \la_3, \la_4, \la_5, \la_6 \, | \, \la_7, - \la_7 ), \, g(\la)  = 0, \, 0 < \la_1 = \la_2 = \la_3 = \la_4 = \la_5 \text{ and } f(\la) \geq \frac{11}{2} \\ 
		& (0, 0, 0, 0, 0,  \la_6 \, | \, \la_7, - \la_7 ), \, g(\la)  = 0, \,  (f(\la) \geq 4 \text{ or } f(\la) = 2  \text{ or } f(\la) = 0).
	\end{align*}
	In terms of the corresponding parameter $\Lambda = \lambda + \rho$, $L(\lambda)$ is unitary if and only if $\Lambda$ is of the form
	\begin{align*}
		\text{Case 1: }& (\La_1, \La_2, \La_3, \La_4, \La_5, \La_6 \, | \, \La_7, - \La_7 ), \, g(\La)  \geq 2 \text{ and } f(\La) \geq -\frac{1}{2}\\
		\text{Case 2: }& (\La_1, \La_2, \La_3, \La_4, \La_5, \La_6 \, | \, \La_7, - \La_7 ), \, g(\La)  = 1, \, \La_1 < \La_2 - 1 \text{ and } f(\La) \geq -1\\
		\text{Case 3: }& (\La_1, \La_2, \La_3, \La_4, \La_5, \La_6 \, | \, \La_7, - \La_7 ), \, g(\La)  = 1, \, \La_1 = \La_2 - 1 < \La_3 - 2 \text{ and } f(\La) \geq - \frac{3}{2}\\ 
		\text{Case 4: }& (\La_1, \La_2, \La_3, \La_4, \La_5, \La_6 \, | \, \La_7, - \La_7 ), \, g(\La)  = 1, \\
		& 0 < \La_1 = \La_2 - 1 = \La_3 - 2 < \La_4 - 3 \text{ and } f(\La) \geq -2 \\ 
		\text{Case 5: }& (0, 1, 2, \La_4, \La_5, \La_6 \, | \, \La_7, - \La_7 ), \, g(\La)  = 1, \, 3  < \La_4 \text{ and } f(\La) \geq -\frac{5}{2} \\ 
		\text{Case 6: }& (\La_1, \La_2, \La_3, \La_4, \La_5, \La_6 \, | \, \La_7, - \La_7 ), \, g(\La)  = 1, \\
		& 0 < \La_1 = \La_2 - 1 = \La_3 - 2 = \La_4 - 3 < \La_5 - 4 \text{ and } f(\La) \geq -\frac{5}{2} \\
		\text{Case 7: }& (0, 1, 2, 3, \La_5, \La_6 \, | \, \La_7, - \La_7 ), \, g(\La)  = 1, \, 4 < \La_5 \text{ and } (f(\La) \geq -\frac{5}{2} \text{ or }f(\La) = - \frac{9}{2}) \\ 
		\text{Case 8: }& (\La_1, \La_2, \La_3, \La_4, \La_5, \La_6 \, | \, \La_7, - \La_7 ), \, g(\La)  = 1, \\
		& 0 < \La_1 = \La_2 - 1 = \La_3 - 2 = \La_4 - 3 = \La_5 - 4 \text{ and } f(\La) \geq -3 \\ 
		\text{Case 9: }& (0, 1, 2, 3, 4,  \La_6 \, | \, \La_7, - \La_7 ), \, g(\La)  = 1, \\
		& (f(\La) \geq  - \frac{9}{2} \text{ or } f(\La) =  - \frac{13}{2}  \text{ or } f(\La) =  - \frac{17}{2}).
	\end{align*}
	For each $\frg$-dominant infinitesimal character $\Lad$, the following Python code in SageMath returns a list of unitary and nonunitary infinitesimal characters which are $W_{\frg}$--conjugates of $\Lad$.
	\begin{lstlisting}
sage: def g(t):
....:     x1, x2, x3, x4, x5, x6, x7, x8 = t
....:     return (1/2)*(x1 - x2 - x3 - x4 - x5 - x6 - 2*x7)
sage:  def case_1(t):
....: ....:     x1, x2, x3, x4, x5, x6, x7, x8 = t
....: ....:     return (g(t) >= 2)
sage:  def case_2(t):
....: ....:     x1, x2, x3, x4, x5, x6, x7, x8 = t
....: ....:     return (g(t) == 1) and (x1 < x2 - 1) 
sage:  def case_3(t):
....: ....:     x1, x2, x3, x4, x5, x6, x7, x8 = t
....: ....:     return (g(t) ==1) and (x1 == x2 - 1 < x3 - 2)  
sage:  def case_4(t):
....: ....:     x1, x2, x3, x4, x5, x6, x7, x8 = t
....: ....:     return (g(t) == 1) and (0 < x1 == x2 - 1 == x3 - 2 < x4 - 3)
sage:  def case_5(t):
....: ....:     x1, x2, x3, x4, x5, x6, x7, x8 = t
....: ....:     return (g(t) == 1) and (0 == x1 == x2 - 1 == x3 - 2) and (x4 > 3)
sage:  def case_6(t):
....: ....:     x1, x2, x3, x4, x5, x6, x7, x8 = t
....: ....:     return (g(t) == 1) and (0 < x1 == x2 - 1 == x3 - 2 == x4 - 3 < x5 - 4) 
sage:  def case_7(t):
....: ....:     x1, x2, x3, x4, x5, x6, x7, x8 = t
....: ....:     return (g(t) == 1) and (0 == x1 == x2 - 1 == x3 - 2 == x4 - 3) and (x5 > 4)
sage:  def case_8(t):
....: ....:     x1, x2, x3, x4, x5, x6, x7, x8 = t
....: ....:     return (g(t) == 1) and (0 < x1 == x2 - 1 == x3 - 2 == x4 - 3 == x5 - 4)
sage:  def case_9(t):
....: ....:     x1, x2, x3, x4, x5, x6, x7, x8 = t
....: ....:     return (g(t) == 1) and (0 == x1 == x2 - 1 == x3 - 2 == x4 - 3 == x5 - 4) 
sage: def f(t):
....:     x1, x2, x3, x4, x5, x6, x7, x8 = t
....:     return x7
sage: def k_dom_reg(t):
....:      x1, x2, x3, x4, x5, x6, x7, x8 = t
....:      return (abs(x1) < x2 < x3 < x4 < x5) and (g(t) > 0)
sage: def g_dom(t):
....:     x1, x2, x3, x4, x5, x6, x7, x8 = t
....:     return abs(x1) <= x2 <= x3 <= x4 <= x5<=x6 and (g(t) >= 0)
sage:  def unitary(t):
....:      x1, x2, x3, x4, x5, x6, x7, x8 = t
....:      return ((case_1(t) == true) and (k_dom_reg(t) == true) and (f(t) >= -1/2)) or ((case_2(t) == true) and  (k_dom_reg(t) == true) and (f(t) >= -1)) or ((case_3(t) == true) and (k_dom_reg(t) == true) and (f(t) >= -3/2)) or ((case_4(t) == true) and (k_dom_reg(t) == true) and (f(t) >= -2)) or ((case_5(t) == true) and (k_dom_reg(t) == true) and (f(t) >= -5/2)) or ((case_6(t) == true) and (k_dom_reg(t) == true) and (f(t) >= -5/2)) or ((case_7(t) == true) and (k_dom_reg(t) == true) and ((f(t) >= -5/2) or (f(t) == -9/2))) or ((case_8(t) == true) and (k_dom_reg(t) == true) and (f(t) >= -3)) or ((case_9(t) == true) and (k_dom_reg(t) == true) and ((f(t) >= -9/2) or (f(t) == -13/2) or (f(t) == -17/2)))
sage: def unitary_inf_char_list(t):
....:      x1, x2, x3, x4, x5, x6, x7, x8 = t
....:      v = vector([x1, x2, x3, x4, x5, x6, x7, x8])
....:      unitary_list = []
....:      for alpha in W_1:
....:          if (unitary(alpha.to_matrix() * v) and (k_dom_reg(alpha.to_matrix() * v) == true) and ((alpha.to_matrix() * v) not in unitary_list)):
....:              unitary_list.append(alpha.to_matrix() * v)
....:      if not g_dom(t):
....:          return "The entered parameter is not g-dominant"
....:      else:
....:          return(unitary_list)
sage: def nonunitary_inf_char_list(t):
....:      x1, x2, x3, x4, x5, x6, x7, x8 = t
....:      v = vector([x1, x2, x3, x4, x5, x6, x7, x8])
....:      nonunitary_list = []
....:      for alpha in W_1:
....:          if ((unitary(alpha.to_matrix() * v) == false) and (k_dom_reg(alpha.to_matrix() * v) == true) and ((alpha.to_matrix() * v) not in nonunitary_list)):
....:                           nonunitary_list.append(alpha.to_matrix() * v)
....:      if not g_dom(t):
....:           return "The entered parameter is not g-dominant"
....:      else:
....:           return(nonunitary_list)
	\end{lstlisting}
	\begin{ex}
		We get the following lists if we put $t = \rho = \left (0, 1, 2, 3, 4, 5, -\frac{17}{2}, \frac{17}{2} \right )$.
		\begin{lstlisting}
sage: t = (0, 1, 2, 3, 4, 5, -17/2, 17/2)
sage: unitary_inf_char_list(t)
[(0, 1, 2, 3, 4, 5, -17/2, 17/2),
(0, 1, 2, 3, 9, -8, -9/2, 9/2),
(3/2, 5/2, 7/2, 9/2, 11/2, -21/2, -3, 3),
(1, 2, 3, 4, 6, -11, -5/2, 5/2),
(0, 1, 2, 5, 6, -11, -5/2, 5/2),
(1/2, 3/2, 5/2, 9/2, 11/2, -23/2, -2, 2),
(0, 1, 3, 4, 5, -12, -3/2, 3/2),
(-1/2, 3/2, 5/2, 7/2, 9/2, -25/2, -1, 1),
(0, 1, 2, 3, 4, -13, -1/2, 1/2),
(0, 1, 2, 3, 4, -13, 1/2, -1/2)]
sage: nonunitary_inf_char_list(t)
[(0, 1, 2, 3, 5, 4, -17/2, 17/2),
(0, 1, 2, 4, 5, 3, -17/2, 17/2),
(0, 1, 3, 4, 5, 2, -17/2, 17/2),
(0, 2, 3, 4, 5, 1, -17/2, 17/2),
(-1, 2, 3, 4, 5, 0, -17/2, 17/2),
(1, 2, 3, 4, 5, 0, -17/2, 17/2),
(0, 2, 3, 4, 5, -1, -17/2, 17/2),
(3/2, 5/2, 7/2, 9/2, 11/2, -1/2, -8, 8),
(1/2, 5/2, 7/2, 9/2, 11/2, -3/2, -8, 8),
(0, 1, 3, 4, 5, -2, -17/2, 17/2),
(1/2, 3/2, 7/2, 9/2, 11/2, -5/2, -8, 8),
(0, 1, 2, 4, 5, -3, -17/2, 17/2),
(0, 1, 4, 5, 6, -3, -15/2, 15/2),
(1/2, 3/2, 5/2, 9/2, 11/2, -7/2, -8, 8),
(0, 1, 2, 3, 5, -4, -17/2, 17/2),
(0, 1, 3, 5, 6, -4, -15/2, 15/2),
(1/2, 3/2, 5/2, 7/2, 11/2, -9/2, -8, 8),
(0, 1, 2, 3, 4, -5, -17/2, 17/2),
(-1/2, 3/2, 5/2, 11/2, 13/2, -9/2, -7, 7),
(0, 1, 3, 4, 6, -5, -15/2, 15/2),
(1/2, 3/2, 5/2, 7/2, 9/2, -11/2, -8, 8),
(0, 1, 2, 6, 7, -5, -13/2, 13/2),
(-1/2, 3/2, 5/2, 9/2, 13/2, -11/2, -7, 7),
(0, 1, 3, 4, 5, -6, -15/2, 15/2),
(0, 1, 2, 5, 7, -6, -13/2, 13/2),
(-1, 2, 3, 4, 7, -6, -13/2, 13/2),
(-1/2, 3/2, 5/2, 9/2, 11/2, -13/2, -7, 7),
(-1/2, 3/2, 5/2, 9/2, 15/2, -13/2, -6, 6),
(0, 1, 2, 5, 6, -7, -13/2, 13/2),
(-1, 2, 3, 4, 6, -7, -13/2, 13/2),
(0, 1, 3, 4, 8, -7, -11/2, 11/2),
(-1/2, 3/2, 5/2, 9/2, 13/2, -15/2, -6, 6),
(-3/2, 5/2, 7/2, 9/2, 11/2, -15/2, -6, 6),
(1/2, 3/2, 5/2, 7/2, 17/2, -15/2, -5, 5),
(0, 1, 3, 4, 7, -8, -11/2, 11/2),
(-1, 2, 3, 5, 6, -8, -11/2, 11/2),
(1/2, 3/2, 5/2, 7/2, 15/2, -17/2, -5, 5),
(-1/2, 3/2, 7/2, 9/2, 13/2, -17/2, -5, 5),
(0, 1, 2, 3, 8, -9, -9/2, 9/2),
(0, 2, 3, 4, 7, -9, -9/2, 9/2),
(0, 1, 4, 5, 6, -9, -9/2, 9/2),
(-1/2, 3/2, 5/2, 7/2, 15/2, -19/2, -4, 4),
(1/2, 3/2, 7/2, 9/2, 13/2, -19/2, -4, 4),
(0, 1, 3, 4, 7, -10, -7/2, 7/2),
(1, 2, 3, 5, 6, -10, -7/2, 7/2),
(1/2, 3/2, 5/2, 9/2, 13/2, -21/2, -3, 3)]
		\end{lstlisting}
	\end{ex}

\end{document}